\newcommand{\E}{\mathcal{E}}
\newcommand{\F}{\mathcal{F}}
\newcommand{\G}{\mathcal{G}}
\newcommand{\I}{\mathcal{I}}
\renewcommand{\L}{\mathcal{L}}
\newcommand{\M}{\mathcal{M}}
\newcommand{\N}{\mathcal{N}}
\renewcommand{\O}{\mathcal{O}}
\renewcommand{\P}{\mathcal{P}}
\newcommand{\f}{\mathfrak{f}}
\newcommand{\CC}{\mathbb{C}}
\newcommand{\FF}{\mathbb{F}}
\newcommand{\GG}{\mathbb{G}}
\newcommand{\NN}{\mathbb{N}}
\newcommand{\ZZ}{\mathbb{Z}}
\newcommand{\coker}{\operatorname{coker}}
\newcommand{\Der}{\operatorname{Der}}
\newcommand{\End}{\operatorname{End}}
\newcommand{\Ext}{\operatorname{Ext}}
\newcommand{\Frob}{\operatorname{Frob}}
\newcommand{\GL}{\operatorname{GL}}
\newcommand{\Gr}{\operatorname{Gr}}
\newcommand{\gr}{\operatorname{gr}}
\newcommand{\Grp}{\operatorname{Grp}}
\newcommand{\Hom}{\operatorname{Hom}}
\newcommand{\id}{\operatorname{id}}
\newcommand{\im}{\operatorname{im}}
\newcommand{\Lie}{\operatorname{Lie}}
\newcommand{\Mod}{\operatorname{Mod}}
\newcommand{\ord}{\operatorname{ord}}
\newcommand{\pr}{\operatorname{pr}}
\newcommand{\Prim}{\operatorname{Prim}}
\newcommand{\Res}{\operatorname{Res}}
\newcommand{\rk}{\operatorname{rk}}
\renewcommand{\span}{\operatorname{span}}
\newcommand{\Spec}{\operatorname{Spec}}
\newcommand{\Sym}{\operatorname{Sym}}
\newcommand{\tors}[1]{#1_{\operatorname{tors}}}
\renewcommand{\bar}{\overline}
\newcommand{\colim}{\varinjlim}
\newcommand{\congr}{\equiv}
\renewcommand{\epsilon}{\varepsilon}
\renewcommand{\hat}{\widehat}
\renewcommand{\implies}{\Rightarrow}
\newcommand{\into}{\hookrightarrow}
\newcommand{\onto}{\twoheadrightarrow}
\newcommand{\isom}{\cong}
\renewcommand{\lim}{\varprojlim}
\newcommand{\longiff}{\Longleftrightarrow}
\newcommand{\longinto}{\lhook\joinrel\longrightarrow}
\newcommand{\longonto}{\relbar\joinrel\twoheadrightarrow}
\newcommand{\longto}{\longrightarrow}
\newcommand{\minus}{\smallsetminus}
\renewcommand{\mod}{\bmod}
\renewcommand{\phi}{\varphi}
\newcommand{\xto}[1]{\xrightarrow{#1}}
\newcommand{\dual}[1]{#1^{\vee}}
\newcommand{\Ga}{\mathbb{G}_a}
\newcommand{\Gm}{\mathbb{G}_m}
\newcommand{\isoto}{\xrightarrow{\ \raisebox{-2pt}[0pt][0pt]{\ensuremath{\sim}}\ }}
\newcommand{\pair}[1]{\langle{#1}\rangle}
\newcommand{\pullback}{\mathlarger{\mathlarger{\mathlarger{\mathlarger{\cdot\!\!\lrcorner}}}}}
\newcommand{\pushout}{\mathlarger{\mathlarger{\mathlarger{\mathlarger{\raisebox{-2.5pt}{\ensuremath{\ulcorner}}\!\!\cdot}}}}}
\DeclareRobustCommand{\skipTOCentry}[5]{}
\newtheorem{thm}{Theorem}[section]
\newtheorem{conj}[thm]{Conjecture}
\newtheorem{cor}[thm]{Corollary}
\newtheorem{lemma}[thm]{Lemma}
\newtheorem{prop}[thm]{Proposition}
\theoremstyle{definition}
\newtheorem{defn}[thm]{Definition}
\newtheorem{ex}[thm]{Example}
\newtheorem{rem}[thm]{Remark}
\numberwithin{equation}{section}
\newcommand{\Fp}{\underline{\mathbb{F}_p}}
\newcommand{\Fq}{\underline{\mathbb{F}_q}}
\renewcommand{\Gr}[1]{\operatorname{Gr}_{#1}}
\renewcommand{\gr}[1]{\operatorname{gr}_{#1}}
\renewcommand{\Grp}[1]{\operatorname{Gr}^{+}_{#1}}
\newcommand{\Grpf}[1]{\operatorname{Gr}^{\boxplus}_{#1}}
\newcommand{\grp}[1]{\operatorname{gr}^{+}_{#1}}
\newcommand{\Gra}[2]{\mathbb{F}_{#1}\mbox{-}\operatorname{Gr}_{#2}}
\newcommand{\gra}[2]{\mathbb{F}_{#1}\mbox{-}\operatorname{gr}_{#2}}
\newcommand{\Grpa}[2]{\mathbb{F}_{#1}\mbox{-}\operatorname{Gr}^{+}_{#2}}
\newcommand{\Grpaf}[2]{\mathbb{F}_{#1}\mbox{-}\operatorname{Gr}^{\boxplus}_{#2}}
\newcommand{\Grpaffp}[2]{\mathbb{F}_{#1}\mbox{-}\operatorname{Gr}^{\boxplus}_{#2,\operatorname{f.p.}}}
\newcommand{\grpa}[2]{\mathbb{F}_{#1}\mbox{-}\operatorname{gr}^{+}_{#2}}
\newcommand{\Grb}[2]{\mathbb{F}_{#1}\mbox{-}\operatorname{Gr}^{+,\operatorname{b}}_{#2}}
\newcommand{\Grbfp}[2]{\mathbb{F}_{#1}\mbox{-}\operatorname{Gr}^{+,\operatorname{b}}_{#2,\operatorname{f.p.}}}
\newcommand{\Grbffp}[2]{\mathbb{F}_{#1}\mbox{-}\operatorname{Gr}^{\boxplus,\operatorname{b}}_{#2,\operatorname{f.p.}}}
\newcommand{\grb}[2]{\mathbb{F}_{#1}\mbox{-}\operatorname{gr}^{+,\operatorname{b}}_{#2}}
\newcommand{\Grplike}{\operatorname{Grp}}
\newcommand{\Hopf}[1]{\operatorname{Hopf}_{#1}}
\newcommand{\hopf}[1]{\operatorname{hopf}_{#1}}
\newcommand{\Hopfa}[2]{\mathbb{F}_{#1}\mbox{-}\operatorname{Hopf}_{#2}}
\newcommand{\hopfa}[2]{\mathbb{F}_{#1}\mbox{-}\operatorname{hopf}_{#2}}
\newcommand{\Mss}{M_{\operatorname{ss}}}
\newcommand{\fss}{f_{\operatorname{ss}}}
\newcommand{\Mnil}{M_{\operatorname{nil}}}
\newcommand{\fnil}{f_{\operatorname{nil}}}
\renewcommand{\Mod}{\operatorname{-Mod}}
\newcommand{\FMod}[1]{\operatorname{Sht}_{#1}}
\newcommand{\Fmod}[1]{\operatorname{sht}_{#1}}
\newcommand{\Sht}[2]{\mathbb{F}_{#1}\mbox{-}\operatorname{Sht}_{#2}}
\newcommand{\Shtfg}[2]{\mathbb{F}_{#1}\mbox{-}\operatorname{Sht}_{#2,\operatorname{f.g.}}}
\newcommand{\sht}[2]{\mathbb{F}_{#1}\mbox{-}\operatorname{sht}_{#2}}
\begin{document}
%------------------------------------------------------

\title{Group schemes with $\mathbb{F}_q$-action}
\author{Thomas Poguntke}
\address{Hausdorff Center for Mathematics \\ Endenicher Allee 62, 53115 Bonn, Germany}
\email{thomas.poguntke@hcm.uni-bonn.de}
%\urladdr{http://www.math.uni-bonn.de/people/poguntke/}

\keywords{Group schemes, shtukas, Verschiebung}
\subjclass[2010]{14L15, 14L17}

\begin{abstract}
{Via a construction due to V. Drinfel'd, we prove an equivalence of categories, generalizing the equivalence between commutative flat group schemes in characteristic $p$ with trivial Verschiebung and their Dieudonn\'e modules to group schemes with $\FF_q$-action.}
\end{abstract}

\date{\today}
\maketitle

{\hypersetup{linkbordercolor=white} \tableofcontents}

\section{Introduction}\label{sec1}

Let $p$ be a prime and let $k$ be a field of characteristic $p$. Denote by $\Grp k$ the category of affine commutative group schemes over $k$ which can be embedded into $\Ga^N$ for some set $N$. We assign to $G\in\Grp k$ its Dieudonn\'e $\FF_p$-module $\M(G) = \Hom_{\Grp k}(G,\Ga)$, with the obvious left module structure over $\End_{\Grp k}(\Ga) \isom k[F]$, the non-commutative polynomial ring with
\[F\lambda = \lambda^pF \mbox{ for } \lambda\in k.\]
These Dieudonn\'e modules completely classify group schemes of the above type, as follows.

\begin{thm}[\cite{DG}, IV, $\mathsection$3, 6.7]\label{mainthmDG}
The contravariant functor $\M$ defines an exact anti-equivalence of categories
\begin{equation}\label{Mpoverk}
\M\!: \Grp k \longto k[F]\Mod.
\end{equation}
Under this duality, algebraic group schemes correspond to finitely generated $k[F]$-modules, and finite group schemes to finite-dimensional $k$-vector spaces.
\end{thm}

The above result allows us to describe the structure of our category over a perfect field, and its simple objects if the base is algebraically closed.

\begin{thm}[\cite{DG}, IV, $\mathsection$3, 6.9]\label{structurethm}
Let $k$ be a perfect field. Then $G\in\Grp k$ is algebraic if and only if it can be written as a product
\[G \isom \Ga^n \times \pi_0(G) \times H,\]
where $n\in\NN$, $H$ is a finite product of group schemes of the form $\alpha_{p^s}$, and $\pi_0(G)$ is an \'etale sheaf of finite $\FF_p$-vector spaces. If $k$ is algebraically closed, then
\[\pi_0(G) \isom (\Fp)^m,\ m\in\NN.\]
\end{thm}

On the other hand, let $S$ be a scheme of characteristic $p$. Consider the category $\operatorname{gr}^{+\vee}_{S}$ of locally finitely presented flat group schemes over $S$ of height $\leq 1$ (i.e. killed by their Frobenius). Let $p\mbox{-}\operatorname{Lie}_{S}$ denote the category of finite locally free $\O_S$-$p$-Lie algebras. Then we have the following classification theorem, similar to the above.

\begin{thm}[\cite{SGA}, Remark 7.5]\label{mainthmSGA}
The covariant functor
\[\L\!: \operatorname{gr}^{+\vee}_{S} \longto p\mbox{-}\operatorname{Lie}_{S},\ G \longmapsto \Lie(G),\]
defines an equivalence of categories.
\end{thm}

Two of our main results generalize Theorem \ref{mainthmDG}, resp. reduce to Theorem \ref{mainthmSGA} via Cartier duality (when ``$q=p$''). Moreover, we formulate two conjectures under which they unify.

Assume that $S$ is an $\FF_q$-scheme for some prime power $q=p^r$. Our group schemes $G$ are affine, commutative, flat over $S$ and carry an $\FF_q$-action. We require that locally on $S$, there is an embedding $G \into \Ga^N$ for some set $N$, which respects the $\FF_q$-actions.

The category of these group schemes will be denoted by $\Grpa qS$, and its full subcategory of finite group schemes of finite presentation is called $\grpa qS$.

On the other hand, we consider left $\O_S[F^r]$-modules, which are flat as $\O_S$-modules. They are called $\FF_q$-shtukas over $S$, and their category is denoted by $\Sht qS$. We write $\sht qS$ for the full subcategory of $\Sht qS$ of locally free modules of finite rank over $\O_S$.

We study the following generalization of the contravariant functor \eqref{Mpoverk},
\[\M_q = \M\!: \grpa qS \longto \sht qS,\ G \longmapsto \Hom_{\Grpa qS}(G,\Ga).\]
We also explain the construction of a functor in the other direction,
\[\G_q = \G\!: \sht qS \longto \grpa qS,\]
which is fully faithful and left-adjoint to $\M$. However, $\G_q$ does not define an equivalence of categories for $q \neq p$. Rather, we describe a full subcategory $\grb qS$ of \textit{balanced} group schemes in $\grpa qS$, and prove that it is the essential image of $\G$.

Namely, let $G = \Spec(B_G)\in\grpa qS$. Then we show that the space of primitive elements in the affine algebra of $G$ decomposes into eigenspaces for the $\FF_q^{\times}$-action as
\begin{equation}\label{primdecomp}
\Prim(B_G) = \bigoplus_{s=0}^{r-1} \Prim_{p^s}(B_G).
\end{equation}
Now $G$ is called balanced, if the $p$-Frobenii $\Prim_{p^t}(B_G) \to \Prim_{p^{t+1}}(B_G),\ x\mapsto x^p$, are bijective for all $0\leq t < r-1$. Note that when $q=p$, we recover $\grb pS = \grp S$.

\begin{thm}\label{finiteversion}
The functor $\G\!: \sht qS \to \grb qS$ defines an exact anti-equivalence of categories with quasi-inverse $\M$.
\end{thm}

Our definition of the balanced subcategory of $\grpa qS$ is inspired by Raynaud's paper \cite{Ray}. He considers finite commutative group schemes $G$ with an action of $\FF_q$, and the decomposition of the augmentation ideal into eigenspaces for the $\FF_q^{\times}$-action,
\[I_G = \bigoplus_{j=1}^{q-1} I_j,\]
similarly to \eqref{primdecomp}. Note that all summands $I_j$ are finite locally free $\O_S$-modules. Raynaud imposes the condition that $\rk(I_j) = 1$, for all $j$.

We define a group scheme $G\in\grpa qS$ to be \textit{quasi-balanced} if $\rk(I_j)$ is the same for all $j$. This turns out to be almost the same as being balanced; in particular, Raynaud's condition implies the balance property. The following theorem is our second main result.

\begin{thm}
Every $G\in\grb qS$ is quasi-balanced. For $q\neq 4$, the converse holds.
\end{thm}

Finally, we consider the question whether $\M\!: \Grb qS \to \Sht qS$ defines an equivalence of categories in general. In order to make sense of this, we have to assume the following.

\begin{conj}
For any $G\in\Grpa qS$, the $\O_S$-module $\M(G)$ is flat.
\end{conj}

Recall from above that it holds for finite $G$. Moreover assume the following key statement.

\begin{conj}
For $G \in\Grpa qS$ locally of finite presentation, locally on $S$, there exists an embedding $G \into \Ga^N$, with $N\in\NN$, such that the morphism $\M(\Ga^N) \to \M(G)$ is surjective.
\end{conj}

This follows from Theorem \ref{mainthmDG} if $S=\Spec(k)$ is a point. Moreover, it is true in the finite case. We obtain the conditional result that $\M$ is an equivalence if we restrict ourselves to finitely presented group schemes and finitely generated $\O_S[F^r]$-modules, respectively.

In particular, if $S=\Spec(k)$, we obtain the following generalization of Theorem \ref{structurethm}.

\begin{thm}
Let $k$ be a perfect field. Then a balanced group scheme $G\in\Grb qk$ is algebraic if and only if it is isomorphic to a product
\[G \isom \Ga^n \times \pi_0(G) \times H,\]
with $n\in\NN$ and $H$ a product of group schemes of the form $\alpha_{q^s}$, and where $\pi_0(G)$ is an \'etale sheaf of finite $\FF_q$-vector spaces. If $k$ is algebraically closed, $\pi_0(G) \isom (\Fq)^m$, for some $m\in\NN$.
\end{thm}

Theorem \ref{finiteversion} has an interesting history. In his article \cite{Drin}, $\mathsection$2, Drinfel'd defines the functor $\G\!: \sht qS \to \grpa qS$ and shows that it is fully faithful and exact. Furthermore, he proves that the \'etale group schemes in $\grpa qS$ lie in the essential image of $\G$.

In Laumon's book \cite{Lau}, App. B, he claims that $\sht qk$ is anti-equivalent to $\grpa qk$, where $k$ is a perfect field of characteristic $p$. However, $\grpa qk$ is not an abelian category for $q\neq p$, and $\alpha_p$ is of $\FF_q$-additive type but not balanced. This error was pointed out to us by Hartl. Laumon's argument is sufficiently detailed to locate the mistake in his reasoning.

In \cite{Ta}, Proposition 1.7, Taguchi gives a (rather brief) proof of Theorem \ref{finiteversion}. However, he describes $\grb qS$ by a condition on the order of the group schemes, which precludes a generalization to the category $\Grpa qS$ as above.

Abrashkin \cite{Abr} considers a category $\operatorname{DGr}^*(\FF_q)_S$, based on a definition of Faltings \cite{Fal}. Roughly, the $\FF_q$-action on $G \in\grpa qS$ is \textit{strict}, if $G$ has a deformation $G^{\flat}$ (which is then universal with respect to its $\FF_q$-action) such that $\FF_q$ acts via scalar multiplication on the associated representative of the cotangent complex.

In $\mathsection$2.3, Abrashkin constructs an equivalence of categories $D_q\!: \sht qS \isoto \operatorname{DGr}^*(\FF_q)_S$. Moreover, in $\mathsection$2.3.2., he shows that a group scheme carrying a strict $\FF_q$-action is balanced. Hence, the obvious functor $\operatorname{DGr}^*(\FF_q)_S \to \grb qS$ is well-defined, and it is clear from the constructions that the following diagram commutes.
\[\begin{tikzcd}
  \sht qS \ar[r, "{D_q}"] \ar[dr, "{\G_q}"'] & \operatorname{DGr}^*(\FF_q)_S \ar[d] \\
  & \grb qS
\end{tikzcd}\]
The above equivalence of categories appears in Hartl-Singh \cite{SinH}, Theorem 5.2, at the torsion level of the function field analogue of the crystalline Dieudonn\'e theory for $p$-divisible groups they establish over a general base. This was one of the main motivations for our study. For further applications in this direction, see for example Hartl-Kim \cite{HK}, as well as the paper \cite{GL} by Genestier and V. Lafforgue, where Theorem \ref{finiteversion} appears as Proposition 0.3.

Let us briefly outline the structure of the paper. In $\mathsection$\ref{sec2}, we provide some basic theory of group schemes we need. Section $\mathsection$\ref{sec3} specializes to group schemes of additive type, and culminates in the proof of Theorem \ref{finiteversion} in the crucial case $q=p$. Some details are postponed to avoid repetition and streamline the argument.

In $\mathsection$\ref{sec4} and $\mathsection$\ref{sec5}, we define the categories $\sht qA$ and $\grpa qA$, respectively, and study their internal structure. Section $\mathsection$\ref{sec6} is concerned with the construction of the functors $\M$ and $\G$, a more detailed analysis of their properties, and the proof of Theorem \ref{finiteversion}.

In $\mathsection$\ref{sec7}, we introduce quasi-balanced group schemes, and compare the two balance conditions. Finally, $\mathsection$\ref{sec8} concerns the question what we can still say in the case of infinite group schemes.

\addtocontents{toc}{\skipTOCentry}
\section*{Acknowledgements}
I would like to thank M. Rapoport for suggesting this topic, for his constant interest and his suggestions. I thank U. Hartl for granting us access to an early draft of the paper \cite{SinH} with Singh. I am indebted to Lisa Sauermann for her contribution to the penultimate section. I would also like to thank M. Raynaud for interesting correspondences on group schemes with trivial Verschiebung. G. Faltings pointed out some mistakes in a preliminary version of the manuscript; for this I am grateful. Finally, I would like to thank the anonymous referee for very helpful suggestions and comments.

\section{Preliminaries on group schemes}\label{sec2}

Let $p$ be a prime number, and $S$ a scheme of characteristic $p$.

\begin{defn}
For an $S$-scheme $X$, denote by $\Frob_X\!: X \to X$ its Frobenius endomorphism. Let $X^{(p)} = X \times_{S,\Frob_S} S$. The relative Frobenius $F_X\!: X \to X^{(p)}$ of $X$ is defined by the following diagram with cartesian square.
\[\begin{tikzcd}
  X \ar[dr, dashed, "{F_X}"] \ar[ddr, bend right, "{\Frob_X}"'] \ar[drr, bend left] & & \\
  & X^{(p)} \ar[r] \ar[d] \ar[dr, phantom, "\pullback" very near start] & S \ar[d, "{\Frob_S}"] \\
  & X \ar[r] & S.
\end{tikzcd}\]
In particular, $F_X$ is a morphism of $S$-schemes.
\end{defn}

\begin{defn}
We denote by $\Gr S$ the category of affine commutative flat group schemes over $S$, and its full subcategory of finite group schemes locally of finite presentation by $\gr S$.
\end{defn}

\noindent\textbf{Convention:} All of our considerations take place locally on $S$. To emphasize when we assume $S=\Spec A$, we will write $\Gr A = \Gr S$. We also fix $G = \Spec B_G$ as a notation.

\begin{defn}
We write $\Hopf A$, resp. $\hopf A$, for the opposite category of $\Gr A$, resp. $\gr A$.
\end{defn}

\begin{defn}
Let $G = \Spec B_G \in \Gr A$. Consider the symmetrization morphism
\begin{equation}\label{symmetrization}
s\!: B_G^{\otimes p} \longto \operatorname{TS}^p(B_G),\ x_1 \otimes \ldots \otimes x_p \longmapsto \sum_{\pi\in S_p} x_{\pi(1)} \otimes \ldots \otimes x_{\pi(p)},
\end{equation}
where $\operatorname{TS}^p(B_G) := (B_G^{\otimes p})^{S_p}$. Since $G$ is flat, $x \mapsto x^{\otimes p}$ induces an isomorphism
\[\sigma_p^* B_G := B_G \otimes_{A,\sigma_p} A \isoto \operatorname{TS}^p(B_G)/s(B_G^{\otimes p}),\]
see \cite{DG}, IV, $\mathsection$3, 4.1. Here, $\sigma_p$ denotes the Frobenius of $A$, and so we have $G^{(p)} = \Spec(\sigma_p^* B_G)$, which is by the above a closed subscheme of $\operatorname{S}^pG := \Spec(\operatorname{TS}^p(B_G))$. The Verschiebung of $G$ is then defined as the composition
\[V_G\!: G^{(p)} \longinto \operatorname{S}^pG \xto{\operatorname{mult}} G,\]
where $\operatorname{mult}$ is the $p$-fold multiplication on $G$, which factors over $\operatorname{S}^pG$, since $G$ is commutative.
\end{defn}

\begin{rem}\label{FVpVF}
We have $F_G \circ V_G = p\cdot\id_G$, and $V_G \circ F_G = p\cdot\id_{G^{(p)}}$, by \cite{DG}, IV, $\mathsection$3, 4.6. On affine algebras, $V_G^*$ acts by taking $p$-th ``copowers''. In this sense, it is dual to the (relative) Frobenius, which we make precise below. The name (German for ``shift'') comes from the Verschiebung on Witt (co-)vectors, where it acts as an index shift (cf. \cite{Fon}, III, $\mathsection$3.1).
\end{rem}

\begin{defn}
For $G\in\Gr A$, let $\eta\!: B_G \to A$ be the augmentation -- or counit --  of $B_G$, given by the unit section of $G$. The augmentation ideal of $G$ is defined by $I_G = \ker(\eta)$.
\end{defn}

\begin{rem}
The short exact sequence
\[0 \longto I_G \longto B_G \xto{\ \eta\ } A \longto 0\]
is split on the right by the unit $\epsilon\!: A \to B_G$ of $B_G$, so that in fact $B_G = A \oplus I_G$. In particular, the $A$-module $I_G$ is flat.
\end{rem}

\begin{defn}
Let $G\in\Gr A$. The space of primitive elements in $B_G$ is defined by
\[\Prim(B_G) := \{x\in I_G \mid \Delta(x) = x\otimes 1 + 1\otimes x\},\]
where $\Delta$ is the comultiplication on $B_G$, i.e. the map induced by the multiplication of $G$. The subgroup of group-like elements of $B_G$ is defined by
\[\Grplike(B_G) := \{x\in B_G \mid \Delta(x) = x \otimes x,\ \eta(x) = 1\},\]
where $\eta\!: B_G \to A$ is the counit of $B_G$.
\end{defn}

\begin{ex}
The group structure on $\Ga = \Spec A[x]$ is defined by $x\in\Prim(B_{\Ga})$. Hence
\[\Prim(B_G) \isom \Hom(G,\Ga),\]
for $G\in\Gr A$, by the universal property of the polynomial algebra.
\end{ex}

\begin{rem}
Let $G\in\Gr A$ and $x \in I_G$. Then
\[\Delta(x) \congr x \otimes 1 + 1 \otimes x \mod I_G \otimes I_G,\]
cf. \cite{Tate}, $\mathsection$2.3. This explains the name ``primitive element''.
\end{rem}

\begin{ex}\label{VofPrim}
Let $x\in B_G$. On $x \in \Prim(B_G)$, the Verschiebung vanishes,
\[V_G^*(x) = x \otimes 1 \otimes \ldots \otimes 1 + \ldots + 1 \otimes \ldots \otimes 1 \otimes x = \frac{1}{(p-1)!}s(x \otimes 1 \otimes \ldots \otimes 1) \congr 0,\]
where $s$ is the morphism \eqref{symmetrization}. On the other hand, if $x \in \Grplike(B_G)$, then
\[V_G^*(x) = x^{\otimes p} \isom x \otimes_{\sigma_p} 1\]
acts identically.
\end{ex}

\begin{prop}\label{primoftensor}
Let $G = \Spec B$ and $H = \Spec C$ be affine group schemes over $A$. Then the primitive elements
\[\Prim(B \otimes C) = \Prim(B) \otimes 1 + 1 \otimes \Prim(C)\]
are compatible with tensor products.
\end{prop}
\begin{proof}
We have to show that the isomorphism
\[\Prim(B) \times \Prim(C) \isoto \Hom(G,\Ga) \times \Hom(H,\Ga) \isoto \Hom(G \times H, \Ga) \isoto \Prim(B \otimes C)\]
is given by $(y,z) \mapsto y \otimes 1 + 1 \otimes z \in \Prim(B \otimes C)$. By definition, the image of $(y,z)$ in $\Hom(G \times H, \Ga)$ is induced by the unique Hopf algebra morphism $f\!: A[x] \to B \otimes C$ so that
\[\begin{tikzcd}[column sep=tiny]
  & & x \!\!\! \ar[dll, mapsto] \ar[r, phantom, "{\in}"] & \!\!\! A[x] \ar[dll] \ar[d, "{f}", "{\exists !}"'] \ar[drr] \!\!\! & \ar[l, phantom, "{\ni}"] \!\!\! x \ar[drr, mapsto] & & \\
  y \ar[r, phantom, "{\in}"] & B & & B \otimes C \ar[ll, "{\pi_B}"] \ar[rr, "{\pi_C}"'] & & C & \ar[l, phantom, "{\ni}"] z
\end{tikzcd}\]
commutes. Here, the projection $\pi_B\!: B \otimes C \to B$ (and similarly $\pi_C\!: B \otimes C \to C$) is given by the injection $G \to G \times H,\ g \mapsto (g,0)$. Explicitly, $\pi_B(b \otimes c) = b\epsilon(\eta(c))$, where $\eta\!: C \to A$ is the counit and $\epsilon\!: A \to C$ the unit of $C$. But now indeed,
\[\pi_B(f(x)) = \pi_B(y\otimes 1 + 1 \otimes z) = y\epsilon(\eta(1)) + \epsilon(\eta(z)) = y,\]
because $z\in I_H = \ker(\eta)$. We have $\pi_C(f(x)) = z$ by the same argument.
\end{proof}

\begin{defn}
Let $G \in \Gr S$. The Cartier dual $\dual G = \underline\Hom(G,\Gm)$ of $G$ is defined by
\[\dual G\!: R \longmapsto \Hom_{\Gr R}(G \otimes R,\ \Gm \otimes R)\]
as a functor of points.
\end{defn}

\begin{ex}
The group scheme $\alpha_p = \Spec A[x]/(x^p)$, with $x\in\Prim(B_{\alpha_p})$, is self-dual,
\[\dual\alpha_p = \alpha_p,\]
cf. e.g. \cite{Pink}, $\mathsection$5, p.11. On the other hand, consider the constant group scheme
\[\Fp \isom \Spec A[x]/(x^p-x),\]
where $x$ is primitive. Its dual is given by the roots of unity, with group-like generator $T$,
\[\mu_p = \Spec A[T]/(T^p-1).\]
\end{ex}

\begin{lemma}[cf. \cite{DG}, II, $\mathsection$1, 2.10; \cite{Stix}, $\mathsection$3.2.2]\label{cartier}
Let $G \in \gr A$. Then $\dual G = \Spec \dual{B_G}$, where the dual Hopf algebra $\dual{B_G} = \Hom_A(B_G,A)$ carries as (co-)multiplication
\[\nabla_{\dual G}\!: \dual{B_G} \otimes \dual{B_G} \isom \Hom(B_G \otimes B_G, A) \xto{\Delta_G^*} \Hom(B_G,A),\]
\[\Delta_{\dual G}\!: \dual{B_G} \xto{\nabla_G^*} \Hom(B_G \otimes B_G, A) \isom \dual{B_G} \otimes \dual{B_G},\]
the transpose of the (co-)multiplication on $B_G$. Similarly for the unit and counit,
\[\epsilon_{\dual G}\!: A \isom \Hom_A(A,A) \xto{\eta_G^*} \dual B_G,\]
\[\eta_{\dual G}\!: \dual B_G \xto{\epsilon_G^*} \Hom_A(A,A) \isom A.\]
\end{lemma}

\begin{rem}
In the case of Lemma \ref{cartier}, it is easy to see that Frobenius and Verschiebung are dual to one another, as Cartier duality exchanges multiplication and comultiplication. The same is indeed true for any $G\in\Gr A$ by \cite{DG}, IV, $\mathsection$3, 4.9. Namely,
\begin{equation}\label{FVdual}
F_{\dual G} = \dual{(V_G)},
\end{equation}
and assuming $\dual G$ is represented by a flat group scheme over $A$, also
\[V_{\dual G} = \dual{(F_G)}.\]
\end{rem}

The following result is crucial for our main theorem in the finite case.

\begin{prop}\label{liedual}
For $G = \Spec B_G \in \gr A$, there are natural isomorphisms of $A$-modules
\[\Lie \dual G \isom \Hom_A(I/I^2, A) \isom \Der_A(\dual B,A) \isom \Prim B_G,\]
where $\dual B = \dual B_G$ and $I = \ker(\dual\eta\!: \dual B \to A)$ is the augmentation ideal of $\dual G$.
\end{prop}
\begin{proof}[Proof (cf. \cite{Fon}, I, $\mathsection$8.3 ff.)]
Let $A(\epsilon) = A[t]/(t^2)$ be the algebra of dual numbers and denote by $\pi\!: A(\epsilon) \onto A$ the projection. For $u \in \dual G(A(\epsilon))$, we have by definition
\[u \in \Lie \dual G = \ker \dual G(\pi) \longiff (\dual B \xto{u} A(\epsilon) \xto{\pi} A) = \dual\eta \longiff u(I) \subseteq \epsilon A(\epsilon).\]
In that case, we get $u(I^2) \subseteq \epsilon^2 A(\epsilon) = 0$, hence an element in the tangent space of $\dual G$,
\[\bar u\!: I/I^2 \longto A,\ \alpha \longmapsto \frac{u(\alpha)}{\epsilon}.\]
The second isomorphism is just the universal property (cf. \cite{Tate}, $\mathsection$2.11)
\[\Der_A(\dual B,A) \isom \Hom_{\dual B}(\Omega^1_{\dual B|A},A) \isom \Hom_{\dual B}(I/I^2 \otimes_A \dual B, A) \isom \Hom_A(I/I^2,A),\]
where the $\dual B$-module structure on $A$ is given by $\dual\eta$. Finally, consider the natural pairing
\[\pair{\mbox{-},\mbox{-}}\!: \dual B \times B_G \longto A,\ (\alpha,x) \longmapsto \alpha(x).\]
For $x\in B_G$, recalling Lemma \ref{cartier}, we have $x \in \Prim(B_G)$ if and only if
\[\pair{\alpha\beta,x} = (\alpha\otimes\beta)(\Delta(x)) = (\alpha\otimes\beta)(x \otimes 1 + 1 \otimes x) = \pair{\alpha,x}\dual\eta(\beta) + \dual\eta(\alpha)\pair{\beta,x},\]
that is to say $\pair{\mbox{-}, x} \in \Der_A(\dual B,A)$.
\end{proof}

\begin{rem}
Proposition \ref{liedual} will also allow us to dualize our theory, in the sense that
\[\Lie \underline\Hom(G,\Gm) = \Hom(G,\Ga),\]
for $G\in\gr A$. Therefore, Cartier duality reduces Theorem \ref{mainthm} to Theorem \ref{mainthmSGA}.
\end{rem}

\section{Group schemes of additive type}\label{sec3}

\begin{defn}
A group scheme $G \in \Gr S$ is of additive type if there exists a closed embedding of $G$ into $\Ga^N$ for some set $N$, locally on $S$. We define $\Grp A$, resp. $\grp A$, to be the full subcategory of $\Gr A$, resp. $\gr A$, of group schemes of additive type.
\end{defn}

\begin{thm}\label{embeddingsintoGa}
Let $G \in\Gr S$ be locally finitely presented. Then the following are equivalent.
\begin{enumerate}[label=\normalfont $(\roman*)$]
\itemsep3pt
\item $G \in \Grp S$.
\item $I_G = (\Prim B_G)$, i.e. $\Prim B_G$ generates $I_G$ as an ideal, locally on $S$.
\end{enumerate}
Moreover, the above conditions imply the following.
\begin{enumerate}[label=\normalfont $(\roman*)$]
\setcounter{enumi}{2}
\item $V_G = 0$.
\end{enumerate}
For finite $G \in \gr S$, all three conditions are equivalent.
\end{thm}
\begin{proof}[Proof (Raynaud)]
The equivalence of $(i)$ and $(ii)$ is clear. Indeed, an embedding $G \into \Ga^N$ is the same as a Hopf algebra epimorphism $A[x_1,\ldots, x_N] \onto B_G$, and the $x_n$ are primitive by definition. The implication "$(ii) \implies (iii)$" is settled by Example \ref{VofPrim}.

Now it remains to show that if $G\in\gr A$ and $A$ is a local ring, then $V_G = 0$ implies that there exists a closed embedding $G \into \Ga^N$ for some $N\in\NN$.

Consider the Cartier dual $\dual G$ of $G$, with affine algebra $\dual B = B_{\dual G}$ (cf. Lemma \ref{cartier}) and augmentation ideal $I = I_{\dual G}$. Since $B_G$ is finite and locally free, it is reflexive. Therefore, we have an inclusion on $R$-valued points as follows,
\begin{equation}\label{dualembedding}
G(R) \isom \Hom_{\Gr R}(\dual G \otimes R,\Gm \otimes R) \isom \Grplike(\dual B \otimes R) \subseteq (\dual B \otimes R)^{\times}.
\end{equation}
The functor $\Res_{\dual B/A}(\Gm \otimes \dual B)$ on the right-hand side is represented by a group scheme, because $\dual B$ is finite flat, and \eqref{dualembedding} defines a closed embedding $G \into \Res_{\dual B/A}(\Gm \otimes \dual B)$. Now, the counit of $\dual B$ induces a natural splitting
\[\eta\!: \Res_{\dual B/A}(\Gm \otimes \dual B) \longonto \Gm\]
of the natural inclusion $\Gm \into \Res_{\dual B/A}(\Gm \otimes \dual B)$. This yields a split short exact sequence
\[\begin{tikzcd}[row sep=0em, column sep=1.5em]
1 \ar[r] & \Gm \ar[r] & \Res_{\dual B/A}(\Gm \otimes \dual B) \ar[r] & 1+\I \ar[r] & 1, \\
 &  & \alpha \ar[r, mapsto] & \frac{\alpha}{\eta_R(\alpha)} \ar[r, phantom, "{\mathrlap{\mbox{(on } R\mbox{-points),}}}"] & \phantom{1}
\end{tikzcd}\]
where $(1+\I)(R) := 1 + \ker(\eta_R)$. Consider the kernel $H$ of the composition
\[G \longinto \Res_{\dual B/A}(\Gm \otimes \dual B) \isom \Gm \times (1+\I) \longonto 1+\I.\]
Then $H$ embeds into $\Gm$, so its fibres are of multiplicative type. But they are also killed by the Verschiebung, hence vanish (\cite{DG}, IV, $\mathsection$3, 4.11). By Nakayama, $H=0$.

Finally, since $F_{\dual G} = \dual{(V_G)} = 0$ by \eqref{FVdual}, we have $\I^p = 0$. Thus $1+\I$ is isomorphic via truncated $\exp$ and $\log$ to the finite free additive group $\I \isom \Ga^{\ord(G)-1}$.
\end{proof}

\begin{rem}
Over a field $A=k$, all three conditions in Theorem \ref{embeddingsintoGa} are equivalent, as shown in \cite{DG}, IV, $\mathsection$3, 6.6. We conjecture that this holds over an arbitrary base scheme $S$.
\end{rem}

\begin{rem}
If condition $(ii)$ in Theorem \ref{embeddingsintoGa} holds, the Hopf algebra $B_G$ is also called primitively generated, i.e. it is generated as an algebra by its primitive elements.
\end{rem}

\begin{ex}
The constant group scheme $\Fp$ over the $\FF_p$-algebra $A$ embeds into $\Ga$ via the projection
\[A[x] \onto A[x]/(x^p-x).\]
The same holds for the group schemes $\alpha_{p^s} = \Spec A[x]/(x^{p^s})$, for $s\in\NN$, since $x$ is primitive by definition. That is, they are all of additive type.
\end{ex}

We now compute the order of a finite group scheme $G$ of additive type. This is the essential step towards our main theorem.

\begin{prop}\label{orderofG}
Let $G \in \grp A$, and consider its dual $\dual B = \dual B_G$. Locally on $\Spec A$, there exists an algebra isomorphism
\[\dual B \isom A[t_1,\ldots,t_n]/(t_1^p,\ldots,t_n^p).\]
Moreover, the $A$-module $\Prim(B_G)$ is locally free, and the order of $G$ is given by
\[\ord(G) = p^{\rk(\Prim B_G)}.\]
\end{prop}
\begin{proof}[Proof (cf. \cite{SGA}, $\mathsection$7.4.3 and \cite{Lau}, Lemma B.3.14)]
Let $A$ be a local ring with residue field $k$. Let $I$ be the augmentation ideal of $\dual B$, which is then free of finite rank $d$. Write $I_k := I \otimes k$, and choose a basis $e_1,\ldots,e_d$ such that $e_{n+1},\ldots,e_d$ is a basis of $I_k^2$. Let $t_i \in I$ be a lift of $e_i$ for $1 \leq i \leq d$, so that $t_1,\ldots,t_d$ is an $A$-basis of $I$ by Nakayama.

Now consider the free $A$-submodule $M := \span_A(t_1,\ldots,t_n) \subseteq I$, and define
\[B' := \Sym(M)/(t^{\otimes p} \mid t \in M) \isom A[t_1,\ldots,t_n]/(t_1^p,\ldots,t_n^p).\]
Since $F_{\dual G} = \dual{(V_G)} = 0$, we have $I^p = 0$, and the canonical morphism
\[\psi\!: B' \longto \dual B,\ t_i \longmapsto t_i,\]
is well-defined. Surjectivity of $\psi$ is easy to check along the filtration
\[0=I^p \subseteq I^{p-1} \subseteq \ldots \subseteq I \subseteq \dual B.\]
We claim that in fact $\dim_k(B'\otimes k) = \dim_k(\dual B\otimes k)$, so that $\psi \otimes k$ is an isomorphism. To show this, we can assume $k$ to be perfect. Using $I_k^p=0$, we then know by \cite{DG}, III, $\mathsection$3, 6.3, that there is an algebra isomorphism
\[\dual B\otimes k \isom k[T_1,\ldots,T_N]/(T_1^p,\ldots,T_N^p).\]
But then in particular $N = \dim(I_k/I_k^2) = n$. Since both $\dual B$ and $B'$ are finite flat $A$-modules of finite presentation, $\psi$ is an isomorphism.

For the second part, it suffices by Proposition \ref{liedual} to show that $I/I^2$ is free. But $\psi^{-1}$ induces an isomorphism of $A$-modules
\[I/I^2 \isoto J/J^2 \isom M,\]
where $J$ denotes the augmentation ideal of $B'$.

Finally, Proposition \ref{liedual} then tells us that $\rk(\Prim B_G) = \rk(I/I^2) = n$, and thus indeed
\[\ord(G) = \rk(\dual B) = p^n = p^{\rk(\Prim B_G)},\]
as desired.
\end{proof}

\begin{rem}\label{primbasechange}
It is easy to see that for any $A$-algebra $R$, and any $G\in\Gr A$, we have a canonical map
\begin{equation}\label{primbc}
\Prim(B_G) \otimes_A R \longto \Prim(B_G \otimes_A R).
\end{equation}
Now let $G \in \grp A$, and assume that $R$ is the residue field at some point of $\Spec A$. To show that \eqref{primbc} is an isomorphism, we may assume $A$ to be local. But then $\eqref{primbc}$ is clearly injective, and both sides of \eqref{primbc} are finite $R$-modules of the same rank by Proposition \ref{orderofG}.
\end{rem}

\begin{defn}\label{addpolydef}
Let $A[F]$ be the non-commutative polynomial ring over $A$ with $F\lambda = \lambda^pF$ for any $\lambda\in A$. Note that $A[F] \isom \span_A(x^{p^e} \mid e \in \NN) \subseteq A[x]$ as $A[F]$-modules via $Fx = x^p$. The category of $A[F]$-modules, which are finite and locally free over $A$, is denoted by $\Fmod A$.
\end{defn}

\begin{prop}\label{addpoly}
Let $N$ be a set. The primitive elements in the affine algebra of $\Ga^N$ are
\[\Prim(A[x_n \mid n\in N]) = \span_A(x_n^{p^e} \mid n\in N,\ e \in \NN),\]
the space of additive polynomials in $A[x_n \mid n\in N]$. In other words,
\[\Hom(\Ga^N,\Ga) \isom A[F]^{\oplus N}\]
as $A[F]$-modules. In particular, there is a natural identification $\End(\Ga) = A[F]$.
\end{prop}
\begin{proof}
Of course, ``$\supseteq$'' holds by definition. Now it suffices to see that
\[\Prim(A[x]) \subseteq \span_A(x^{p^e} \mid e \in \NN),\]
since we may assume $N$ to be finite, and use induction over $\# N$ via Proposition \ref{primoftensor}. Let thus $z = \sum_{n\in\NN} \lambda_n x^n \in \Prim(A[x])$. Then
\[\Delta(z) = \sum_{n\in\NN} \lambda_n (x^n \otimes 1) + \sum_{n\in\NN} \lambda_n (1 \otimes x^n).\]
On the other hand, since $\Delta$ is an algebra morphism and $x$ is primitive,
\[\Delta(z) = \sum_{n\in\NN} \lambda_n (x \otimes 1 + 1 \otimes x)^n = \sum_{n\in\NN} \lambda_n \sum_{k\leq n} \binom nk (x^k \otimes x^{n-k}).\]
Comparing coefficients, we see that if $\lambda_n \neq 0$, then $\binom nk \congr 0 \mod p$ for all $0 < k < n$. But this implies that $n = p^e$ for some $e\in\NN$, cf. \cite{Fine}, Theorem 3.
\end{proof}

\begin{defn}
We denote the Dieudonn\'e $\FF_p$-module functor on $\grp A$ by
\[\M\!: \grp A \longto \Fmod A,\ G \longmapsto \Hom(G,\Ga).\]
Here, the $A[F]$-module structure on $\M(G)$ is given as in Definition \ref{addpolydef} by
\[Fx = x^p \in \Prim(B_G).\]
Equivalently, this is the obvious left module structure on $\Hom(G,\Ga)$ over $\End(\Ga) = A[F]$, cf. Proposition \ref{addpoly}. Conversely, let us define for $M\in\Fmod A$ the corresponding group scheme
\[\G(M) = \Spec(\Sym(M)/\f),\]
where $\f$ is the ideal $\f = (x^{\otimes p} - Fx \mid x\in M)$ in the symmetric algebra over $A$. The group structure on $\G(M)$ is defined by $\Prim(B_{\G(M)}) \supseteq M$, by extension to the whole algebra.
\end{defn}

\begin{rem}
The easy verification that the functors $\M$ and $\G$ are well-defined is a special case of Remark \ref{Mwelldef} and Remark \ref{imageofG}, respectively.
\end{rem}

\begin{ex}
Proposition \ref{addpoly} implies for the standard subgroup schemes of $\Ga$ that
\[\M(\alpha_{p^s}) \isom A[F]/(F^s), \mbox{ and } \M(\Fp) \isom A[F]/(F-1).\]
\end{ex}

\begin{thm}\label{mainthm}
The functor $\M$ defines an exact anti-equivalence of categories.
\end{thm}
\begin{proof}
Locally on $\Spec A$, choose a basis $x_1,\ldots,x_N$ of $M\in\Fmod A$. This yields the basis
\[\{\prod_{i=1}^N x_i^{e_i} \mid 0 \leq e_i < p\}\]
of $B_{\G(M)} = \Sym(M)/\f$, locally over $\Spec A$. Therefore, we obtain
\begin{equation}\label{orderofGM}
\ord \G(M) = p^{\rk M}.
\end{equation}
Now, it is not hard to see that the functor $\G\!: \Fmod A \to \grp A$ is left-adjoint to $\M$. We will give the details in Lemma \ref{adjunction}. For $G\in\grp A$ and $M \in \Fmod A$, consider the adjunction morphisms
\[u_G\!: G \longto \G(\M(G)), \mbox{ and } v_M\!: M \longto \M(\G(M)).\]
By construction, $v_M$ is the inclusion $M \into \Prim(B_{\G(M)})$. From \eqref{orderofGM} and Proposition \ref{orderofG}, as well as base change (Remark \ref{primbasechange}), we see that $v_M$ is an isomorphism.

Now consider the map $u_G^*$, which extends the identity on $\Prim(B_G)$ to a morphism
\[u_G^*\!: \Sym(\Prim(B_G))/(x^{\otimes p} - x^p \mid x\in\Prim(B_G)) \longto B_G.\]
Since $B_G$ is primitively generated, $u_G^*$ is surjective. By Proposition \ref{orderofG} as well as \eqref{orderofGM},
\[\ord \G(\M(G)) = p^{\rk(\Prim B_G)} = \ord G.\]
Thus $u_G^*$ is an epimorphism between finite locally free modules of the same rank, and hence bijective. Finally, we see that $\M$ is exact by (Lemma \ref{adjunction} and) additivity of the rank.
\end{proof}

\section{The category of \texorpdfstring{$\FF_q$}{Fq}-shtukas}\label{sec4}

Let $q=p^r$ be a power of the prime $p$, and assume that $S$ an $\FF_q$-scheme.

\begin{defn}
A finite $\FF_q$-shtuka over $S$ is a pair $(M,f)$, where $M$ is a finite locally free $\O_S$-module, and $f$ is a $q$-linear endomorphism of $M$. Equivalently, $f$ is a linearized map
\[f^{(q)}\!: \sigma_q^*M = M \otimes_{\O_S,\sigma_q} \O_S \longto M,\]
where $\sigma_q = \sigma_p^r$ is the Frobenius of $\O_S$. A morphism $\Phi\!: (M,f) \to (M',f')$ in the category $\sht qS$ of finite $\FF_q$-shtukas over $S$ is an $\O_S$-module morphism such that the diagram
\[\begin{tikzcd}
  M \ar[r, "{\Phi}"] \ar[d, "f"'] & M' \ar[d, "{f'}"] \\
  M \ar[r, "{\Phi}"] & M'
\end{tikzcd}\]
commutes. We shall write $\sht qA = \sht qS$, when $S=\Spec A$.
\end{defn}

\begin{rem}
Note that $(M,f) \in \sht pA$ is the same as the left $A[F]$-module $M$, defined by $Fx = f(x)$ for $x\in M$. Thus we recover $\Fmod A = \sht pA$.
\end{rem}

The definition comes from the following geometric example.

\begin{ex}
Let $X$ be a smooth projective geometrically irreducible curve over $\FF_q$. Then a (right) shtuka (or $F$-sheaf) of rank $d\in\NN$ over $S$ is a diagram
\begin{equation}\label{shtuka}
  \begin{tikzcd}[row sep=tiny]
    \L \ar[dr, "i"] & \\
    & \E \\
    \mathllap{(\id_X \times F_S)^*}\L \ar[ur, "{\tau}"] & 
  \end{tikzcd}
\end{equation}
with $\L$ and $\E$ locally free sheaves of $\O_{X \times S}$-modules of rank $d$, injective homomorphisms $\tau$ and $i$, and such that $\coker(\tau)$, resp. $\coker(i)$, is supported on the graph $\Gamma_{\alpha}$, resp. $\Gamma_{\beta}$, of some sections $\alpha,\beta\!: S \to X$ (called the zero, resp. the pole, of the shtuka).

Let $D\subseteq X$ be a finite subscheme away from the pole, i.e. $\beta(S) \subseteq X \minus D$. Then $i|_{D \times S}$ is an isomorphism. Setting $\L_D := \L|_{D \times S}$, we therefore obtain a morphism completing the restriction to $D \times S$ of diagram \eqref{shtuka},
\[f' = (i|_{D \times S})^{-1} \circ \tau|_{D \times S}\!: (\id_D \times F_S)^*\L_D \longto \L_D.\]
Denote by $\pi\!: D \times S \to S$ the projection, then $(\pi_*\L_D, f)$ is a finite $\FF_q$-shtuka over $\O_S$, where
\[f^{(q)}\!: F_S^*\pi_*\L_D \isom \pi_*(\id_D \times F_S)^*\L_D \xto{\pi_*f'} \pi_*\L_D.\]
Drinfel'd \cite{Drin} introduced $F$-sheaves in the proof of the Langlands conjecture for $\GL_2$ over a global field of characteristic $p$.
\end{ex}

Let us remark here a simple dichotomy in the category $\sht qk$, where $k$ is a perfect field. We will later use it to generalize Theorem \ref{structurethm}.

\begin{lemma}\label{ssnil}
Let $k$ be a perfect field. For $(M,f) \in \sht qk$, there is a unique decomposition
\[(M,f) = (\Mss,\fss) \oplus (\Mnil,\fnil)\]
such that $\fss = f|_{\Mss}$ is bijective and $\fnil = f|_{\Mnil}$ is nilpotent.
\end{lemma}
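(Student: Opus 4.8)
The plan is to apply the Fitting decomposition to the $q$-semilinear endomorphism underlying $(M,f)$. Note first that $M$, being flat of finite rank over the field $k$, is a finite-dimensional $k$-vector space. Since $k$ is perfect, $\sigma_q = \sigma_p^r$ is an automorphism of $k$, so the additive map $x \mapsto x \otimes 1$ is a bijection $M \isoto \sigma_q^* M$, and composing with $f$ yields an additive endomorphism $\phi\!: M \to M$, $\phi(x) = f(x \otimes 1)$, which is $\sigma_q$-semilinear in the sense that $\phi(\lambda x) = \lambda^q \phi(x)$. Under this identification, $k$-subspaces $M' \subseteq M$ with $\phi(M') \subseteq M'$ correspond exactly to sub-shtukas of $(M,f)$, and $f|_{M'}$ is an isomorphism (resp. nilpotent) if and only if $\phi|_{M'}$ is bijective (resp. nilpotent). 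So it suffices to produce a Fitting decomposition for $\phi$.

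First I would record the two elementary facts that actually use perfectness. (a) For each $n$ the image $\phi^n(M)$ is a $k$-subspace: it is visibly closed under addition, and for $\mu \in k$ one has $\mu\cdot\phi^n(x) = \phi^n(\mu^{q^{-n}} x)$, since $\mu^{q^{-n}}$ exists by perfectness, so it is closed under scalars. (b) A surjective $\sigma_q$-semilinear endomorphism of a finite-dimensional $k$-vector space is bijective, because the associated linearization $\sigma_q^*(\cdot) \to (\cdot)$ is a surjection between spaces of equal finite dimension. With these in hand, consider the descending chain $M \supseteq \phi(M) \supseteq \phi^2(M) \supseteq \cdots$ and the ascending chain $0 \subseteq \ker\phi \subseteq \ker\phi^2 \subseteq \cdots$ of subspaces of $M$. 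By finite-dimensionality both stabilize, say from index $N$ on, and I set $\Mss = \phi^N(M)$ and $\Mnil = \ker\phi^N$.

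Both subspaces are $\phi$-stable, hence sub-shtukas: $\phi(\Mss) = \phi^{N+1}(M) = \Mss$ and $\phi(\ker\phi^N) \subseteq \ker\phi^N$. On $\Mss$ the map $\phi$ is surjective, hence bijective by (b), so $\fss$ is an isomorphism; on $\Mnil$ we have $\phi^N = 0$ by construction, so $\fnil$ is nilpotent. That $M = \Mss \oplus \Mnil$ is the usual Fitting argument: if $x \in \Mss \cap \Mnil$, write $x = \phi^N(y)$; then $\phi^{2N}(y) = \phi^N(x) = 0$, so $y \in \ker\phi^{2N} = \ker\phi^N$ and $x = \phi^N(y) = 0$; and for general $x$, bijectivity of $\phi^N$ on $\Mss$ gives $z \in \Mss$ with $\phi^N(z) = \phi^N(x)$, whence $x = z + (x-z)$ with $x-z \in \ker\phi^N = \Mnil$.

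Finally, for uniqueness, suppose $(M,f) = (M_1,\phi|_{M_1}) \oplus (M_2,\phi|_{M_2})$ with $\phi|_{M_1}$ bijective and $\phi|_{M_2}$ nilpotent. Then $M_1 = \phi^n(M_1) \subseteq \phi^n(M)$ for all $n$ gives $M_1 \subseteq \Mss$, while $\phi^m|_{M_2} = 0$ for some $m$ gives $M_2 \subseteq \ker\phi^m \subseteq \Mnil$; comparing dimensions in $M = M_1 \oplus M_2 = \Mss \oplus \Mnil$ then forces $M_1 = \Mss$ and $M_2 = \Mnil$. The only genuinely shtuka-specific inputs, and the sole place where perfectness is essential, are facts (a) and (b) together with the identification of $f$ with the semilinear endomorphism $\phi$; everything else is the standard Fitting lemma, so I do not expect a real obstacle beyond carefully bookkeeping the semilinearity.
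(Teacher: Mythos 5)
Your proof is correct and follows essentially the same route as the paper: both are the Fitting decomposition for the $q$-linear endomorphism, with $\Mss = \im(f^N)$ and $\Mnil = \ker(f^N)$ after the chains of images and kernels stabilize, followed by the standard argument that $\Mss \cap \Mnil = 0$. You simply spell out details the paper leaves implicit — that perfectness of $k$ makes the images genuine subspaces and surjectivity imply bijectivity, the explicit spanning argument (the paper instead uses the dimension count $\dim M = \dim\Mss + \dim\Mnil$), and the uniqueness statement — so there is no substantive difference in method.
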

\begin{proof}[Proof (see \cite{Lau}, Lemma B.3.10)]
Since $k$ is perfect, let us identify $f^{(q)} = f$. Let
\[\Mss = \bigcap_{n\in\NN} \im(f^n), \mbox{ and } \Mnil = \bigcup_{n\in\NN} \ker(f^n).\]
Then there is some $N\in\NN$ with $\Mss = \im(f^N)$ and $\Mnil = \ker(f^N)$, so that in particular
\[\dim(M) = \dim(\Mss) + \dim(\Mnil).\]
Now suppose that $m\in\Mss \cap \Mnil$. Then we have $m = f^N(m')$ for some $m'\in M$, and we obtain $f^{2N}(m') = f^N(m) = 0$. But since $\ker(f^{2N}) = \ker(f^N)$, in fact $m = f^N(m') = 0$.
\end{proof}

\section{\texorpdfstring{$\FF_q$}{Fq}-actions on group schemes}\label{sec5}

\begin{defn}
Let $S$ be an $\FF_q$-scheme, and $G\in\Gr S$. An $\FF_q$-action on $G$ is a ring morphism
\[[-]_G\!: \FF_q \longto \End_{\Gr S}(G),\ \alpha \longmapsto [\alpha]_G.\]
A morphism of group schemes with $\FF_q$-action $\varphi\!: (G,[-]_G) \to (H,[-]_H)$ is a morphism of group schemes over $S$ such that the diagram
\begin{equation}\label{action}
  \begin{tikzcd}
    G \ar[r, "{\phi}"] \ar[d, "{[\alpha]_G}"'] & H \ar[d, "{[\alpha]_H}"] \\
    G \ar[r, "{\phi}"] & H
  \end{tikzcd}
\end{equation}
commutes for all $\alpha\in\FF_q$. When there is no ambiguity, we will just write $[\alpha]$ for the action.

\smallskip

We denote by $\Gra qS$ the category of group schemes in $\Gr S$, together with an $\FF_q$-action. For its objects, we will write $G$ instead of $(G,[-]_G)$. The full subcategory of $\Gra qS$ of finite group schemes is $\gra qS$. We replace $S$ by $A$ in the notation, when $S=\Spec A$. As before, we consider the dual categories $\Hopfa qA$, resp. $\hopfa qA$.
\end{defn}

\begin{ex}
When we consider $\Ga = \Spec A[x]$ as an object of $\Gra qA$, we mean that
\[[\alpha]^*x = \alpha x \mbox{ for all } \alpha\in\FF_q,\]
unless explicitly stated otherwise. The same extends to the group schemes $\alpha_{p^s} \subseteq \Ga$ and the constant group $\Fq \subseteq \Ga$, as well as any product of these groups.
\end{ex}

\begin{rem}\label{categorical}
Let $G,H \in\Gra qA$. Then $G \times H$ is endowed with the product $\FF_q$-action
\[[\alpha]_{G\times H} = [\alpha]_G \times [\alpha]_H.\]
Let $\phi,\psi\!: G \to H$ be morphisms in $\Gra qA$. Then the diagram
\[\begin{tikzcd}
  G \ar[r, "{\operatorname{diag}}"] \ar[d, "{[\alpha]_G}"'] & G \times G \ar[r, "{\phi \times \psi}"] \ar[d, "{[\alpha]_{G\times G}}"'] & H \times H \ar[r, "{\operatorname{mult}}"] \ar[d, "{[\alpha]_{H\times H}}"] & H \ar[d, "{[\alpha]_H}"] \\
  G \ar[r, "{\operatorname{diag}}"] & G \times G \ar[r, "{\phi \times \psi}"] & H \times H \ar[r, "{\operatorname{mult}}"] & H
\end{tikzcd}\]
commutes for all $\alpha\in\FF_q$, so that $\phi + \psi \in \Hom_{\Gra qA}(G,H)$ again.

Moreover, $\Gra qA$ is an $\FF_q$-linear category. Namely, $\Hom_{\Gra qA}(G,H)$ is given a vector space structure by the obvious actions (which agree by definition) of $\alpha\in\FF_q$ via
\[[\alpha]_G \in \End_{\Gra qA}(G), \mbox{ resp. } [\alpha]_H \in \End_{\Gra qA}(H).\]
Now consider an arbitrary fibre product diagram in $\Gra qA$ as in \eqref{pullbackaction} below. Then
\begin{equation}\label{pullbackaction}
  \begin{tikzcd}[row sep=tiny, column sep=small]
    G' \ar[rr] \ar[dd] \ar[dr, dashed] &  & H' \ar[dd] \ar[dr, "{[\alpha]_{H'}}"] &  \\
     & G' \ar[rr, crossing over] \ar[dr, phantom, "{\pullback}" very near start] &  & H' \ar[dd] \\
    G \ar[rr] \ar[dr, "{[\alpha]_G}"'] &  & H \ar[dr, "{\!\!\!\![\alpha]_H}"] &  \\
     & G \ar[rr] \ar[from=uu, crossing over] &  & H
  \end{tikzcd}
\end{equation}
defines a canonical $\FF_q$-action on $G'$, since all squares in \eqref{pullbackaction} commute, for all $\alpha\in\FF_q$. The dual construction yields an $\FF_q$-action for pushouts.

For example, if $A$ is a field, the $\FF_q$-action of $B_G$ descends to $\ker(\phi) = \Spec(B_G/\phi^*(I_H)B_G)$ (cf. \cite{Mil}, VII, Proposition 4.1), which is seen directly from the commutative diagram \eqref{action}.

Similarly, the $\FF_q$-action on $H$ restricts to $\coker(\phi) = \Spec C$, where
\[C = \{x \in B_H \mid \Delta(x) - 1 \otimes x \in \ker(\phi^*) \otimes B_H\},\]
see \cite{Fon}, I, $\mathsection$6.3. Indeed, by \eqref{action} again, for all $x\in C$, we have
\[\Delta([\alpha]^*x) - 1 \otimes [\alpha]^*x = ([\alpha]^* \otimes [\alpha]^*)(\Delta(x) - 1 \otimes x) \in \ker(\phi^*) \otimes B_H.\]
\end{rem}

\begin{defn}
Let $G = \Spec B_G \in \Gra qA$. The eigenspaces of the $\FF_q^{\times}$-action on the augmentation ideal $I=I_G$ are given by
\[I_j := I_j(G) := \{x\in I_G \mid [\alpha]^*x = \alpha^j x \mbox{ for all } \alpha\in\FF_q\},\]
for $0 < j < q$, identifying $\FF_q^{\times} \isom \ZZ/(q-1)$. We also set $\Prim_j(B_G) := \Prim(B_G) \cap I_j$.
\end{defn}

\begin{rem}\label{eigendecomp}
Since $\ord(\FF_q^{\times})$ is prime to $p$, the ideal $I_G$ decomposes into its eigenspaces as
\begin{equation}\label{decomp}
I_G = \bigoplus_{j=1}^{q-1} I_j.
\end{equation}
Indeed, we can write $\FF_q[\FF_q^{\times}] = \FF_q[X]/(X^{q-1}-1) = \bigoplus\limits_{j=1}^{q-1} \FF_q\chi_j$, with $\chi_j(\alpha) = \alpha^j$ for $\alpha\in\FF_q^{\times}$. This yields a system of orthogonal idempotents of $\End_A(I_G)$, cf. \cite{OoTa}, Lemma 2,
\[e_j = \frac{1}{q-1}\sum_{\alpha\in\FF_q^{\times}} \chi_j^{-1}(\alpha)[\alpha]^*,\ 0 < j < q.\]
Hence we obtain $\eqref{decomp}$, since $I_j = e_jI_G$. In particular, it follows that the $I_j$ are flat over $A$, as direct summands of the flat $A$-module $I$. The analogous statements hold for the $\Prim_j(B_G)$, if $\Prim(B_G)$ is flat, noting that $\Prim(B_G)$ is stable under $[\alpha]^*\in\End_{\Hopf A}(B_G)$, $\alpha\in\FF_q$.
\end{rem}

\begin{defn}
Let $G\in\Gra qS$. We say that $G$ is of $\FF_q$-additive type, if locally on $S$, there exists an $\FF_q$-equivariant closed embedding
\[G \longinto \Ga^N \mbox{ for some set } N.\]
The full subcategory of $\Gra qA$, resp. $\gra qA$, of group schemes $G$ of $\FF_q$-additive type is denoted by $\Grpa qA$, resp. $\grpa qA$. For $q = p$, we drop $\FF_q$ from the notation, as before.
\end{defn}

\begin{rem}\label{addtypeiffprimgen}
Let $G\in\Gra qS$ be locally of finite presentation. Then
\[G \mbox{ is of $\FF_q$-additive type} \longiff I_G = (\Prim_1(B_G)), \mbox{ locally on } S,\]
in analogy to Theorem \ref{embeddingsintoGa}.
\end{rem}

\begin{rem}
Let $G,H \in \Grpa qA$. If we have $G\into \Ga^N$ and $H\into \Ga^L$ in $\Gra qA$, then
\[G\times H \longinto \Ga^{N \cup L},\]
cf. Remark \ref{categorical}. Therefore, $G\times H \in \Grpa qA$. Conversely, the embeddings $G,H \into G \times H$ respect the $\FF_q$-actions. Hence if $G\times H$ is of $\FF_q$-additive type, then so are $G$ and $H$.

This does not generalize to arbitrary extensions, as the following example illustrates. That is, $\Grpa qA$ is not a Serre subcategory of $\Gra qA$ (indeed, nor is $\grpa qA \subseteq \gra qA$).
\end{rem}

\begin{ex}\label{cokerex}
Let $q\neq p$, and consider the following short exact sequence
\[0 \longto \alpha_p \longto \alpha_q \longto H \longto 0\]
in $\gra qA$. Note that $\alpha_p,\alpha_q \in \grpa qA$. Applying $\Hom_{\Gra qA}(-,\Ga)$, we get
\[0 \longto \Prim_1(B_H) \longto \Prim_1(B_{\alpha_q}) \isoto \Prim_1(B_{\alpha_p}).\]
Hence $I_H \neq (\Prim_1(B_H)) = 0$, and $H$ is not of $\FF_q$-additive type, by Remark \ref{addtypeiffprimgen}.
\end{ex}

\begin{thm}\label{frobsurj}
Let $G\in\gra qA$. Then $G$ is of $\FF_q$-additive type if and only if it is of additive type and the $p$-Frobenii
\[f_t\!: \Prim_{p^t}(B_G) \longto \Prim_{p^{t+1}}(B_G),\ x\longmapsto x^p,\]
are surjective, $0 \leq t < r-1$. Moreover, if $G\in\grpa qA$, the primitive elements decompose as
\[\Prim(B_G) = \bigoplus_{s=0}^{r-1} \Prim_{p^s}(B_G)\]
into eigenspaces. Equivalently, $\Prim_j(B_G) = 0$ for all $j \neq p^s$, $s\in\NN$.
\end{thm}
\begin{proof}
We may assume $A$ to be local, and let $\iota\!: G \into \Ga^N$ be an embedding in $\Gra qA$. Proposition \ref{addpoly} implies that the additive polynomials decompose as desired,
\[\P := \Prim(A[x_1,\ldots,x_N]) = \bigoplus_{s=0}^{r-1} \P_s,\]
with $\P_s = \Prim_{p^s}(A[x_1,\ldots,x_N])$. Let $k$ be the residue field of $A$. We have the epimorphism
\[\iota^*|_{\P} \otimes_A k\!: \P \otimes_A k = \Prim(k[x_1,\ldots,x_N]) \longonto \Prim(B_G \otimes_A k)\]
by Theorem \ref{mainthmDG}. Thus $\P \otimes_A k \onto \Prim(B_G) \otimes_A k$ is surjective, as well, by Remark \ref{primbasechange}. Now consider the filtration
\[\Prim(B_G) =: M \supseteq M^p \supseteq M^{p^2} \supseteq \ldots,\]
where $M^{p^t} := \im((\sigma_p^t)^*M \to M,\ x \mapsto x^{p^t})$. Then we may conclude that $\iota$ induces surjections
\[\iota^*\!: \P^{p^t}/\P^{p^{t+1}} \longonto M^{p^t}/M^{p^{t+1}}\]
for all $t \geq 0$, by Nakayama's lemma. Therefore,
\begin{equation}\label{primsurj}
\iota^*(\Prim A[x_1,\ldots,x_N]) = \Prim(B_G).
\end{equation}
But $\iota^*$ respects the $\FF_q$-action, so in fact $\iota^*(\P_s) = \Prim_{p^s}(B_G)$ for all $0 \leq s < r$. This settles the second part, and moreover implies that under the epimorphism $\iota^*$, we have
\[0 = \P_{t+1}/(\P_t)^p \longonto \Prim_{p^{t+1}}(B_G)/f_t(\Prim_{p^t}(B_G)).\]
Conversely, if all the $f_t$ are surjective, we have $I_G = (\Prim(B_G)) = (\Prim_1(B_G))$, and $G$ is of $\FF_q$-additive type by Remark \ref{addtypeiffprimgen}.
\end{proof}

\begin{rem}
In the previous proof, we do not need to invoke Theorem \ref{mainthmDG}. With the above notation, we may assume $k$ algebraically closed. Then $\Ext^1_{\Gr k}(\coker\iota \otimes k,\GG_{a,k})=0$, cf. \cite{Lau}, Lemma B.3.15. This implies as before the surjectivity of
\[\iota^*|_{\P}\!: \Hom(\Ga^N,\Ga) \longto \Hom(G,\Ga).\]
\end{rem}

\begin{lemma}\label{balequiv}
Let $G \in \grpa qA$, and let $f_t$ be the $p$-Frobenii from Theorem \ref{frobsurj}. The following conditions are equivalent.
\begin{enumerate}[leftmargin=10ex, label=\normalfont $(\roman*)$]
\itemsep3pt
\item The maps $f_t$ are bijective, for all $0 \leq t < r-1$.
\item The map $f'\!: \Prim_1(B_G) \to \Prim_{p^{r-1}}(B_G),\ x \mapsto x^{p^{r-1}}$, is injective.
\item The rank of $\Prim_{p^s}(B_G)$ is the same for all $0 \leq s \leq r-1$.
\item $\ord(G) = q^{\rk \Prim_1(B_G)}$.
\end{enumerate}
\end{lemma}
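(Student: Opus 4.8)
The plan is to reduce to the case where $A$ is local, so that every finite flat $A$-module is finite free, and then to translate all four conditions into the single numerical statement that the ranks $n_s := \rk(\Prim_{p^s}(B_G))$, for $0 \le s \le r-1$, are all equal. Since $\Prim(B_G)$ is flat by Proposition \ref{orderofG} and each $\Prim_{p^s}(B_G)$ is a direct summand of it by Remark \ref{eigendecomp}, the modules $P_s := \Prim_{p^s}(B_G)$ are finite free over the local ring $A$. By the eigenspace decomposition in Theorem \ref{frobsurj} we have $\rk \Prim(B_G) = \sum_{s=0}^{r-1} n_s$, so Proposition \ref{orderofG} gives $\ord(G) = p^{\sum_s n_s}$; note also that $\Prim_1(B_G) = P_0$, whence $\rk \Prim_1(B_G) = n_0$.

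Two inputs drive the argument. First, Theorem \ref{frobsurj} tells us that each $f_t : P_t \to P_{t+1}$ is surjective; reducing modulo the maximal ideal $\m$ of $A$ yields a surjection of $k$-vector spaces, so $n_0 \ge n_1 \ge \cdots \ge n_{r-1}$. Second, I will use the elementary fact that a surjective $A$-linear map between finite free $A$-modules of equal rank over a local ring is an isomorphism: choosing bases, such a map is represented by an $n\times n$ matrix $T$ whose reduction modulo $\m$ is a surjective, hence invertible, endomorphism of $k^n$, so $\det T \notin \m$, i.e. $\det T \in A^\times$ and $T \in \GL_n(A)$.

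With these in hand, each condition becomes equivalent to ``$n_0 = \cdots = n_{r-1}$''. For $(i)\iff(iii)$: an isomorphism preserves rank, so $(i)$ forces all $n_s$ equal; conversely, if the ranks coincide, then each surjection $f_t$ between free modules of equal rank is an isomorphism by the fact above. For $(ii)\iff(iii)$: the map $f'$ is the composite $f_{r-2}\circ\cdots\circ f_0$ of the surjections $f_t$, hence itself surjective, so if it is moreover injective it is bijective, giving $n_0 = n_{r-1}$; combined with the decreasing chain this forces all ranks equal, and the reverse implication is immediate from $(i)\iff(iii)$. For $(iv)\iff(iii)$: since $\ord(G) = p^{\sum_s n_s}$ while $q^{\rk \Prim_1(B_G)} = p^{r n_0}$, condition $(iv)$ reads $\sum_{s=0}^{r-1} n_s = r n_0$; given $n_s \le n_0$ for every $s$, this sum is at most $r n_0$, with equality precisely when all $n_s = n_0$.

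The genuine content is concentrated in the two inputs above -- the surjectivity supplied by Theorem \ref{frobsurj} and the equal-rank-surjection-is-an-isomorphism lemma -- together with the reduction to a local base, which makes ranks well defined and flat modules free; once these are in place, the equivalences are pure rank bookkeeping. The point requiring care is the local reduction, and the verification that $f'$ is exactly the composite of the $f_t$, so that injectivity of $f'$ in $(ii)$ can be played off against its automatic surjectivity.
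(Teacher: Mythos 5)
Your proof is correct and follows essentially the same route as the paper's: surjectivity of the $f_t$ from Theorem \ref{frobsurj}, the Nakayama-type fact that an equal-rank surjection of finite free modules over a local ring is an isomorphism, and the order formula of Proposition \ref{orderofG}, with the rank bookkeeping (the decreasing chain $n_0 \geq \cdots \geq n_{r-1}$ and its interplay with $\sum_s n_s = rn_0$) merely made more explicit than in the paper's three-line argument. The one caveat—shared with the paper, which adopts this reading in Remark \ref{altb}—is that the $p$-power maps are $p$-semilinear rather than $A$-linear, so your equal-rank-surjection fact must be applied to the linearizations $\sigma_p^*\Prim_{p^t}(B_G) \to \Prim_{p^{t+1}}(B_G)$, whose source has the same rank $n_t$.
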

\begin{proof}
The first two conditions are equivalent by Theorem \ref{frobsurj}. Furthermore, by Nakayama, they are equivalent to $(iii)$. Now, Proposition \ref{orderofG} tells us that $\rk \Prim_{p^s}(B_G) = \rk \Prim_{1}(B_G)$ for all $0 \leq s \leq r-1$ if and only if
\[\ord(G) = p^{\rk(\Prim B_G)} = q^{\rk(\Prim_1 B_G)}.\]
Hence $(iii)$ is equivalent to $(iv)$.
\end{proof}

\begin{defn}
We say that the group scheme with $\FF_q$-action $G\in\grpa qA$ is balanced, if the equivalent conditions in Lemma \ref{balequiv} hold for $G$. The full subcategory of $\grpa qA$ of balanced group schemes will be called $\grb qA$.
\end{defn}

\begin{rem}
In \cite{Ta}, Taguchi defines $\grb qA$ using condition $(iv)$ from Lemma \ref{balequiv}. The first two conditions will be useful to generalize the definition to infinite group schemes.
\end{rem}

\begin{rem}
We have $\grb pA = \grp A$, since the condition on the $p$-Frobenii is empty.
\end{rem}

\begin{ex}\label{balancedex}
Consider $G = \alpha_{p^s} = \Spec A[x]/(x^{p^s})$ with the usual $\FF_q$-action $[\alpha]^*x = \alpha x$ for $\alpha\in\FF_q$. Then
\[\alpha_{p^s} \mbox{ is balanced} \longiff r|s.\]
Indeed, $\Prim(B_{\alpha_{p^s}}) = \span_A(x,x^p,\ldots,x^{p^{s-1}})$ by Proposition \ref{addpoly}, and therefore
\[\Prim_1(B_{\alpha_{p^s}}) = \span_A(x^{q^a} \mid 0 \leq a < s/r),\]
since $p^s \congr 1 \mod q-1 \longiff \frac{p^s-1}{q-1}\in\ZZ \longiff r|s$.
\end{ex}

\begin{rem}\label{balancedlocus}
Let $G \in \grpa qS$. If $S$ is connected, then
\[G \mbox{ is balanced} \longiff G_s \mbox{ is balanced for some point } s\in S,\]
because $\Prim(B_G)$ is locally free and stable under base change (Remark \ref{primbasechange}). Hence the balanced locus of $G\in\grpa qS$ in $S$ is a union of connected components. If $S$ is noetherian, it is thus closed and open.
\end{rem}

\begin{rem}\label{etimpliesb}
If $G \in \grpa qS$ is \'etale, its Frobenius is an isomorphism. Therefore, $G$ is balanced by Lemma \ref{balequiv}, $(i)$.
\end{rem}

\begin{lemma}\label{productb}
Let $G,H\in\grpa qA$. If two of $G$, $H$ and $G\times H$ are balanced, then so is the third.
\end{lemma}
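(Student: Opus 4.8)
The plan is to reduce everything to the behaviour of the $p$-power maps $f_t$ under the decomposition of $\Prim(B_{G\times H})$ coming from the tensor product $B_{G\times H} = B_G \otimes B_H$. First I would invoke Proposition \ref{primoftensor} to identify
$$\Prim(B_{G\times H}) = \Prim(B_G) \otimes 1 \oplus 1 \otimes \Prim(B_H),$$
and then check that this splitting is compatible with the eigenspace decomposition. Since the $\F_q$-action on $G\times H$ is the product action (Remark \ref{categorical}), its dual is $[\alpha]^* = [\alpha]_G^* \otimes [\alpha]_H^*$, so that $x \otimes 1$ lies in the $\alpha \mapsto \alpha^{p^s}$ eigenspace precisely when $x \in \Prim_{p^s}(B_G)$, and likewise for $1 \otimes y$. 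This yields, for each $0 \leq s \leq r-1$,
$$\Prim_{p^s}(B_{G\times H}) = \Prim_{p^s}(B_G) \oplus \Prim_{p^s}(B_H).$$

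Next I would observe that the $p$-power maps respect this splitting. As $x \otimes 1$ and $1 \otimes y$ commute in $B_G \otimes B_H$ and we are in characteristic $p$, the $p$-th power is additive, with $(x \otimes 1)^p = x^p \otimes 1$ and $(1\otimes y)^p = 1 \otimes y^p$. Hence the $p$-power map $f_t$ on $\Prim_{p^t}(B_{G\times H})$ is the direct sum $f_t^G \oplus f_t^H$ of the corresponding maps for $G$ and $H$, for every $0 \leq t < r-1$.

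By Lemma \ref{balequiv}, being balanced is equivalent to all the $f_t$ being isomorphisms, and a block-diagonal map $f_t^G \oplus f_t^H$ is an isomorphism if and only if both $f_t^G$ and $f_t^H$ are isomorphisms. The three implications then follow immediately: if $G$ and $H$ are balanced, every $f_t^G \oplus f_t^H$ is an isomorphism, so $G\times H$ is balanced; and if $G\times H$ is balanced together with, say, $G$, then $f_t^G \oplus f_t^H$ and $f_t^G$ are both isomorphisms, forcing $f_t^H$ to be an isomorphism, so $H$ is balanced (and symmetrically with $G$ and $H$ interchanged).

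The argument is essentially formal once the decomposition is in place; the only point requiring care is the compatibility of the tensor-product splitting of Proposition \ref{primoftensor} with both the $\F_q$-eigenspace decomposition and the $p$-power maps, which I do not expect to present a genuine obstacle. One should, however, note at the outset that $G\times H$ indeed lies in $\grpa qA$, so that balancedness is meaningful for it; this is guaranteed by the remark that a product of group schemes of $\F_q$-additive type is again of $\F_q$-additive type.
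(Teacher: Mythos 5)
Your proof is correct and takes essentially the same approach as the paper: both rest on Proposition \ref{primoftensor} to split $\Prim_{p^s}(B_{G\times H})$ as $\Prim_{p^s}(B_G)\oplus\Prim_{p^s}(B_H)$ for each $s$, and then conclude via condition $(i)$ of Lemma \ref{balequiv}. Your write-up simply makes explicit two points the paper declares ``clear'' --- the compatibility of the splitting with the $\F_q$-eigenspaces and with the $p$-power maps, so that each $f_t$ becomes block-diagonal --- and both verifications are correct.
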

\begin{proof}
Proposition \ref{primoftensor} implies that
\[\Prim_{p^s}(B_G \otimes B_H) = \Prim_{p^s}(B_G) \otimes 1 + 1 \otimes \Prim_{p^s}(B_H)\]
for all $0 \leq s \leq r-1$. Then it is clear that if two of $G$, $H$ and $G\times H$ satisfy condition $(i)$ from Lemma \ref{balequiv}, the third does as well.
\end{proof}

\begin{rem}
A posteriori, Lemma \ref{productb} holds for general extensions $0 \to G \to E \to H \to 0$. Namely, by Theorem \ref{mainthmFq}, we obtain a short exact sequence
\[0 \longto \Prim_1(B_H) \longto \Prim_1(B_E) \longto \Prim_1(B_G) \longto 0.\]
Then the statement follows using Lemma \ref{balequiv}, $(iv)$, together with Proposition \ref{orderofG}.
\end{rem}

\section{The functors \texorpdfstring{$\G$}{G} and \texorpdfstring{$\M$}{M}}\label{sec6}

We continue to denote by $A$ an $\FF_q$-algebra, for $q=p^r$.

\begin{defn}
The Dieudonn\'e $\FF_q$-functor is the contravariant functor
\[\M_q = \M\!: \grb qA \longto \sht qA,\ G \longmapsto (\Prim_1(B_G), x \mapsto x^q).\]
Recall that $\Prim_1(B_G) \isom \Hom_{\gra qA}(G,\Ga)$, and that it is locally free over $A$ (Remark \ref{eigendecomp}).
\end{defn}

\begin{rem}\label{Mwelldef}
$\M$ is well-defined, since $\Delta(x^q) = (x\otimes 1 + 1 \otimes x)^q = x^q \otimes 1 + 1 \otimes x^q$, and
\[[\alpha]^*x^q = \alpha^q x^q = \alpha x^q \mbox{ for } x\in \M(G),\ \alpha\in\FF_q.\]
\end{rem}

\begin{defn}
The Drinfel'd $\FF_q$-functor is defined to be the contravariant functor
\[\G_q = \G\!: \sht qA \longto \grb qA,\ (M,f) \longmapsto \Spec(\Sym(M)/\f),\]
where $\f$ is the ideal $\f = (x^{\otimes q} - f(x) \mid x \in M)$. Comultiplication and $\FF_q$-action are given by
\[\Delta(x) = x \otimes 1 + 1 \otimes x, \mbox{ and } [\alpha]^*x = \alpha x \mbox{ for } x\in M,\ \alpha\in\FF_q,\]
extended to the whole algebra.
\end{defn}

\begin{rem}\label{imageofG}
Let $(M,f)\in\sht qA$. The $\FF_q$-action on $\G(M,f)$ is well-defined, since
\[[\alpha]^*x^{\otimes q} = \alpha^q x^{\otimes q} = \alpha f(x) = [\alpha]^*f(x) \mbox{ for } \alpha\in\FF_q,\ x\in M.\]
Furthermore, locally on $\Spec A$, we can take a basis $x_1,\ldots,x_N$ of $M$ and the projection
\[\Sym(M) \longonto \Sym(M)/\f\]
will define an embedding $\G(M,f) \into \Ga^N$, which respects the $\FF_q$-actions. Moreover, note that the products $\{\prod_{i=1}^N x_i^{e_i} \mid 0 \leq e_i < q\}$ form a basis of $B_{\G(M,f)} = \Sym(M)/\f$. Therefore 
\begin{equation}\label{orderofGMf}
\ord(\G(M,f)) = q^{\rk M},
\end{equation}
and indeed $\G(M,f) \in \grb qA$ by Lemma \ref{balequiv}, $(iv)$.
\end{rem}

\begin{thm}\label{mainthmFq}
The Drinfel'd $\FF_q$-functor $\G\!: \sht qA \to \grb qA$ defines an exact anti-equivalence of categories with quasi-inverse $\M$.
\end{thm}
\begin{proof}
The proof is the same as for Theorem \ref{mainthm}, where we use \eqref{orderofGMf} in place of \eqref{orderofGM}. This yields that the adjunction morphisms $v_{(M,f)}$ are bijective. On the other hand,
\[u_G^*\!: \Sym(\Prim_1(B_G))/(x^{\otimes q} - x^q \mid x\in\Prim_1(B_G)) \longto B_G\]
is surjective, since $B_G$ is generated as an algebra by $\Prim_1(B_G)$, cf. Remark \ref{addtypeiffprimgen}. Finally,
\[\ord \G(\M(G)) = q^{\rk(\Prim_1 B_G)} = p^{\rk(\Prim B_G)} = \ord G,\]
by Proposition \ref{orderofG}, and because $G$ is balanced.
\end{proof}

Let us note some properties of the functor $\G$ (cf. \cite{Drin}, Proposition 2.1).

\begin{prop}\label{properties}
Let $(M,f)\in\sht qA$, and $I = I_{\G(M,f)}$ the augmentation ideal of $\G(M,f)$. The cotangent space of $\G(M,f)$ is described by
\[I/I^2 \isom \coker(f^{(q)})\]
as an $A$-module, where $f^{(q)}\!: \sigma_q^*M \to M$, as before. Moreover,
\begin{enumerate}[leftmargin=10ex, label=\normalfont $(\alph*)$]
\itemsep3pt
\item The group scheme $\G(M,f)$ is \'etale $\longiff f$ is bijective.
\item The fibres of $\G(M,f)$ are connected $\longiff f$ is nilpotent, locally on $\Spec A$.
\end{enumerate}
\end{prop}
\begin{proof}
For the first part, we see that the composition $\tau\!: M \into I \onto I/I^2$ is surjective, since every element of $I/I^2$ is represented by a linear polynomial in $M$. But
\[\ker(\tau) = M \cap I^2 = f(M).\]
Indeed, an element of $M$ lies in $I^2$ if and only if it is of the form
\[f(x) \congr x^{\otimes q} \in I^2 \mbox{ for some } x \in M.\]
The statements $(a)$,$(b)$ follow from the fact that $f$ is a power of the Frobenius on $\Sym(M)/\f$. Hence $f$ is bijective $\longiff \Frob_{\G(M,f)}$ is an isomorphism $\longiff \G(M,f)$ is \'etale (\cite{DG}, IV, $\mathsection$3, 5.3), because $\G(M,f)$ is finite flat and finitely presented.

Analogously, $f$ is locally nilpotent if and only if $\Frob_{\G(M,f)}$ is. But each fibre of $\G(M,f)$ is connected if and only if its Frobenius is nilpotent (\textit{loc.cit.}).
\end{proof}

\begin{rem}
If $A$ is a field, Theorem \ref{mainthmFq} in particular says that $\grb qA$ is an abelian category. On the other hand, it follows from Example \ref{cokerex} that for $q\neq p$, the category $\grpa qA$ is not abelian. The problem is of course that $\alpha_p$ is not balanced, by Example \ref{balancedex}.
\end{rem}

\begin{ex}\label{kerex}
Let $q \neq p$. Take $B_G = A[x_1,\ldots,x_r]/(x_1^p,\ldots,x_r^p)$ with $x_i \in \Prim_{p^{i-1}}(B_G)$. Clearly, $\rk \Prim_{p^s}(B_G)$ is the same for all $0 \leq s < r$, and $\Prim_{j} B_G = 0$ for all $j \neq p^s$. Then the Hopf algebra morphism
\[u^*\!: A[x]/(x^q) \longto B_G,\ x \longmapsto x_1\]
is compatible with the $\FF_q$-actions, i.e., it induces $u\!: G \to \alpha_q$ in $\gra qA$. Setting $I = (x)$ to be the augmentation ideal of $\alpha_q$, we see that
\[\ker(u) = \Spec A[x_2,\ldots,x_r]/(x_2^p,\ldots,x_r^p) = \Spec(B_G/u^*(I)B_G)\]
is not balanced. Of course, $G$ is not of $\FF_q$-additive type.
\end{ex}

In Theorem \ref{structurethmFq}, we will be able to describe the structure of our category over a perfect field $k$. This allows the following fibrewise characterization.

\begin{cor}
Let $G\in\grpa qS$ be fibrewise connected. Then $G$ is balanced if and only if it is of the form $\prod \alpha_{q^{s_i}}$ on all geometric fibres over $S$.
\end{cor}
\begin{proof}
We can check $G$ to be balanced on the fibres, by Remark \ref{balancedlocus} and since the condition is stable under base change (cf. Remark \ref{primbasechange}). By Theorem \ref{structurethmFq}, a geometric fibre of $G$ is balanced if and only if it is of the form $\prod \alpha_{q^{s_i}}$.
\end{proof}

The following result gives another perspective on the balance property (Remark \ref{altb}).

\begin{prop}
Assume that $A$ is an $\FF_{q^n}$-algebra, $n\geq 1$. Let $\F\!: \grpa{q^n}{A} \to \grpa qA$ be the forgetful functor. Then the following diagram commutes,
\begin{equation}\label{powerofq}
  \begin{tikzcd}
    \mathllap{(M,f)} \in \ar[d, mapsto, shift right=9ex] \sht{q^n}{A} \ar[r, "{\G_{q^n}}"] \ar[d] & \grpa{q^n}{A} \ar[d, "{\F}"] \\
    \mathllap{\Big(\bigoplus\limits_{i=0}^{n-1} (\sigma_q^i)^*M, F\Big)} \in \sht qA \ar[r, "{\G_q}"] & \grpa qA,
  \end{tikzcd}
\end{equation}
where $F\!: (x_0,\ldots,x_{n-1}) \mapsto (x_1,\ldots,x_{n-1},f(x_0))$ is the matrix $\smash[t]{\begin{pmatrix} 0 & 1 & & 0 \\ & \ddots & \ddots & \\ & & \ddots & 1 \\ f & & & 0 \end{pmatrix}}$.
\end{prop}
\begin{proof}
We denote the scalar multiplication on $(\sigma_q^i)^*M$ by $\lambda.x = \lambda^{q^i}x$ for $\lambda\in A,\ x\in M$ (the usual action without the dot). Since $f$ is $q^n$-linear, we get
\[F(\lambda.(x_0,\ldots,x_{n-1})) = (\lambda^q x_1,\ldots,\lambda^{q^{n-1}}x_{n-1},\lambda^{q^n}f(x_0)) = \lambda^q.F(x_0,\ldots,x_{n-1}),\]
for all $(x_0,\ldots,x_{n-1}) \in M' := \bigoplus\limits_{i=0}^{n-1} (\sigma_q^i)^*M$. We have to show that
\[\Sym(M)/\f_M \isom \Sym(M')/\f_{M'} \mbox{ in } \hopfa qA,\]
where $\f_M = (x^{\otimes q^n} - f(x) \mid x \in M)$ and $\f_{M'} = (\underline x^{\otimes q} - F(\underline x) \mid \underline x \in M')$. We define
\[\phi\!: \Sym(M) \longto \Sym(M')/\f_{M'} \mbox{ via } M \ni x \longmapsto (x,0,\ldots,0) \in M',\]
extended to an algebra morphism. Note that for $x \in M$, we have
\[\phi(x^{q^i}) = (x,0,\ldots,0)^{q^i} = F^i(x,0,\ldots,0) = (\ldots,0,x,0,\ldots) \in (\sigma_q^i)^*M \subseteq M'.\]
Now, $\phi$ factors through the quotient, because
\[\phi(x^{q^n} - f(x)) = F^n(x,0,\ldots,0) - (f(x),0,\ldots,0) = 0.\]
Finally, $\bar\phi\!: \Sym(M)/\f_M \to \Sym(M')/\f_{M'}$ is an isomorphism in $\Hopfa qA$, since by definition,
\[\bar\phi(\Prim_1(\Sym(M)/\f_M)) = \Prim_1(\Sym(M')/\f_{M'}),\]
and locally on $\Spec(A)$, it maps bases to bases.
\end{proof}

\begin{rem}\label{altb}
Consider the following diagram,
\begin{equation}\label{powerofp}
  \begin{tikzcd}
    \grpa qA \ar[r, "{\M_q}"] \ar[d, "{\F}"'] & \sht qA \ni \mathrlap{(M,f)} \ar[d] \ar[d, mapsto, shift left=9ex] \\
    \grpa pA \ar[r, "{\M_p}"] & \sht pA \ni \mathrlap{\Big(\bigoplus\limits_{t=0}^{r-1} (\sigma_p^t)^*M, F\Big)}
  \end{tikzcd}
\end{equation}
corresponding to the commutative diagram \eqref{powerofq}. Requiring \eqref{powerofp} to commute recovers the balance condition. Namely, for $G\in\grpa qA$, we have
\[\Prim(B_G) \isom \bigoplus\limits_{t=0}^{r-1} (\sigma_p^t)^*\Prim_1(B_G)\] if and only if all the (linearized) maps
\[(\sigma_p^t)^*\Prim_1(B_G) \longto \Prim_{p^t}(B_G),\ x \longmapsto x^{p^t},\ (0 \leq t < r)\]
are isomorphisms. This is equivalent to condition $(ii)$ in Lemma \ref{balequiv}.
\end{rem}

\begin{rem}
The functor $\Sigma\!: \sht{q^n}{A} \to \sht qA,\ (M,f) \mapsto \Big(\bigoplus\limits_{i=0}^{n-1} (\sigma_q^i)^*M, F\Big)$ from the above diagram \eqref{powerofq} has a left adjoint. Namely, there are bifunctorial isomorphisms
\begin{equation}\label{powerofqadj}
\Hom_{\sht{q^n}{A}}((M,f^n),(M',f')) \isoto \Hom_{\sht qA}((M,f),\Sigma(M',f')),
\end{equation}
for all $(M,f)\in\sht qA$ and $(M',f')\in\sht{q^n}{A}$. Indeed, for $\Phi\!: (M,f^n) \to (M',f')$, set
\[\Phi'\!: (M,f) \longto \Sigma(M',f'),\ x \longmapsto (\Phi(x),\Phi(f(x)),\ldots,\Phi(f^{n-1}(x))) \in \Big(\bigoplus\limits_{i=0}^{n-1} (\sigma_q^i)^*M', F'\Big).\]
Then $\Phi'$ is a morphism of shtukas, because $f'(\Phi(x)) = \Phi(f^n(x))$ for $x\in M$, and thus
\[F'(\Phi'(x)) = (\Phi(f(x)),\ldots,\Phi(f^{n-1}(x)),f'(\Phi(x))) = (\Phi(f(x)),\ldots,\Phi(f^n(x))) = \Phi'(f(x)).\]
Conversely, if $\Psi\!: (M,f) \to \Sigma(M',f')$ lies in the right hand side of \eqref{powerofqadj}, we obtain a morphism on the left by composition with the projection $\pr_0\!: \bigoplus\limits_{i=0}^{n-1} (\sigma_q^i)^*M' \to M',\ (x_i) \mapsto x_0$, say
\[\Psi' := \pr_0 \circ \Psi\!: (M,f^n) \longto (M',f'),\ x \longmapsto \Psi(x)_0.\]
Applying $\pr_0$ to $\Psi(f^n(x)) = (F')^n(\Psi(x)) = (F')^{n-1}(\ldots,f'(\Psi'(x)) = (f'(\Psi'(x)),\ldots)$ yields
\[\Psi'(f^n(x)) = f'(\Psi'(x)).\]
By Theorem \ref{mainthmFq} and commutativity of \eqref{powerofq}, we can transfer \eqref{powerofqadj} to an adjunction
\[\Hom_{\grb{q}{A}}(\F(G),H) \isoto \Hom_{\grb{q^n}{A}}(G,\Omega(H)).\]
That is, $\Omega\!: \grb{q}{A} \to \grb{q^n}{A}$ is a right adjoint to $\F\!: \grb{q^n}{A} \to \grb qA$, the forgetful functor. Unfortunately, it is not clear how to describe $\Omega$ intrinsically. One might guess that it is given by Serre's tensor construction \cite{Con}, Theorem 7.2. This induces a functor
\[\grpa{q}{A} \longto \grpa{q^n}{A},\ G \longmapsto G \otimes_{\FF_q} \FF_{q^n},\]
where $(G \otimes_{\FF_q} \FF_{q^n})(R) = G(R) \otimes_{\FF_q} \FF_{q^n}$ on points. The $\FF_{q^n}$-operation is on the right variable. Then $\Ga \otimes_{\FF_q} \FF_{q^n} \isom \Ga^n$, equivariantly, where $\Ga^n$ carries its usual (product) $\FF_{q^n}$-action. This indeed agrees with "$\Omega(\Ga)$" $\!= \G_{q^n}(\N(\M_q(\Ga)))$, where $\N(M,f) = (M,f^n)$ is the left adjoint of $\Sigma$ from above. Moreover, $\Fq \otimes_{\FF_q} \FF_{q^n} \isom \underline{\FF_{q^n}} = \Omega(\Fq)$. However, if $n>1$, already as schemes,
\[\alpha_q \otimes_{\FF_q} \FF_{q^n} \isom \alpha_q^n \not\isom \alpha_{q^n} = \G_{q^n}(\N(\M_q(\alpha_q))) = \Omega(\alpha_q).\]
More precisely, since Serre's tensor construction is exact, $\alpha_q \otimes_{\FF_q} \FF_{q^n} \into \Ga  \otimes_{\FF_q} \FF_{q^n}$, and this is the product embedding $\alpha_q^n \into \Ga^n$. On the other hand, the composition $\Omega = \G_{q^n} \circ \N \circ \M_q$ maps $\alpha_q \into \Ga$ to the embedding into the first coordinate $\alpha_{q^n} \into \Ga^n$.
\end{rem}

\section{Quasi-balanced group schemes}\label{sec7}

Let $A$ be an $\FF_q$-algebra, $q=p^r$. For $G\in\grpa qA$, consider the eigenspace decomposition
\[I_G = \bigoplus_{j=1}^{q-1} I_j\]
for the $\FF_q^{\times}$-action on the augmentation ideal of $G$, cf. \eqref{decomp}.

\begin{defn}
A group scheme $G\in \grpa qA$ is called quasi-balanced if $\rk(I_j)$ is the same for all $1 \leq j \leq q-1$.
\end{defn}

\begin{rem}\label{qpower}
Let $G\in \grpa qA$ be quasi-balanced. By Proposition \ref{orderofG}, $\rk(I_G) = p^N - 1$, where $N = \rk(\Prim(B_G))$. Then we have $\frac{p^N-1}{q-1} \in \ZZ,$ and thus $r | N$, say $rn = N$. This yields
\[\rk(I_j) = q^{n-1} + \ldots + q + 1 \mbox{ for all } 1 \leq j \leq q-1,\] and of course $\ord G = q^n$. Note that the analogue of Remark \ref{balancedlocus} holds, i.e. for $G\in\grpa qS$, the quasi-balanced locus of $G$ in $S$ is closed and open (if the base is noetherian).
\end{rem}

\begin{lemma}\label{bimpliesqb}
Every $G\in\grb qA$ is quasi-balanced.
\end{lemma}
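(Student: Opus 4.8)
The plan is to make the $\F_q^\times$-action completely explicit on a monomial basis of $B_G$, which turns the statement into a purely combinatorial count of exponent tuples.

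First I would exploit that $G$ is balanced: by Theorem \ref{mainthmFq} the functor $\M$ is a quasi-inverse to $\G$, so $G \cong \G(M,f)$ with $(M,f) = \M(G) = (\Prim_1(B_G),\ x\mapsto x^q)$. Since the $\rk I_j$ can be compared locally, I may assume $A$ local, so that $M$ is free; fix an $A$-basis $x_1,\ldots,x_N$ of $M = \Prim_1(B_G)$, where $N = \rk\Prim_1(B_G)$. By Remark \ref{imageofG}, the monomials $\prod_{i=1}^N x_i^{e_i}$ with $0\le e_i<q$ then form an $A$-basis of $B_G = \Sym(M)/\f$.

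Next I would observe that this basis diagonalizes the $\F_q^\times$-action. Each $x_i$ lies in $\Prim_1(B_G)$, so $[\alpha]^* x_i = \alpha x_i$, and since $[\alpha]^*$ is an algebra homomorphism, $[\alpha]^*\prod_i x_i^{e_i} = \alpha^{\sum_i e_i}\prod_i x_i^{e_i}$. Thus every basis monomial is an eigenvector, with eigenvalue governed by $\sum_i e_i \bmod (q-1)$. Consequently, for $1\le j\le q-1$ the eigenspace $I_j$ (the character $\alpha\mapsto\alpha^j$, with $j=q-1$ the trivial one) is the free $A$-module on those basis monomials $\neq 1$ with $\sum_i e_i \equiv j \pmod{q-1}$. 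Hence $\rk I_j$ equals the number of tuples $(e_1,\ldots,e_N)\in\{0,\ldots,q-1\}^N$ with $\sum_i e_i \equiv j \pmod{q-1}$, where the single tuple $(0,\ldots,0)$ is removed from the class $j\equiv 0$, since it represents the constant $1\in A$ rather than an element of $I_G$.

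Finally I would carry out the count in the group ring $\Z[\Z/(q-1)]$: a single coordinate contributes $u+e$, where $e$ is the identity and $u$ is the sum of all group elements, and the $N$-fold product records the counts by residue class. Using $u^2 = (q-1)u$ one obtains $(u+e)^N = e + \tfrac{q^N-1}{q-1}\,u$, so the class $j\equiv 0$ contains $1+\tfrac{q^N-1}{q-1}$ tuples and every other class contains $\tfrac{q^N-1}{q-1}$. Subtracting the zero tuple from the class $j\equiv 0$ gives $\rk I_j = \tfrac{q^N-1}{q-1} = q^{N-1}+\cdots+q+1$ for all $1\le j\le q-1$, so $G$ is quasi-balanced (and this value matches Remark \ref{qpower}, with $n=N$). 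The only genuinely non-formal point is the independence of this count from $j$; everything else is bookkeeping. A roots-of-unity filter yields the same conclusion if one prefers to avoid the group-ring manipulation.
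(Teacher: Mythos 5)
Your proof is correct and follows essentially the paper's own argument: use balancedness to get the monomial basis $\{\prod_i x_i^{e_i}\}$ of $B_G$ from a basis of $\Prim_1(B_G)$, observe that it diagonalizes the $\F_q^{\times}$-action with eigenvalue determined by $\sum_i e_i \bmod (q-1)$, and count exponent tuples in each residue class. The only deviation is the final count, which the paper does by induction on the rank and you do via the group-ring identity $(u+e)^N = e + \tfrac{q^N-1}{q-1}u$ in $\Z[\Z/(q-1)]$; both yield $\rk I_j = q^{N-1}+\cdots+q+1$ for all $j$.
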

\begin{proof}
We may assume that $A$ is a local ring. If $x_1,\ldots,x_n$ is a basis of $\Prim_1(B_G)$, then
\[\{\prod_{i=1}^n x_i^{e_i} \mid 0 \leq e_i < q,\ (e_1,\ldots,e_n) \neq 0\}\]
is a basis of $I_G$ (cf. Remark \ref{imageofG}), since $G$ is balanced. On this basis, $\alpha\in\FF_q$ acts via
\[[\alpha]^*\prod_{i=1}^n x_i^{e_i} = \alpha^{\sum e_i}\prod_{i=1}^n x_i^{e_i}.\]
Therefore, it decomposes into eigenbases for $I_j$,
\[\{\prod_{i=1}^n x_i^{e_i} \mid 0 \leq e_i < q,\ (e_1,\ldots,e_n) \neq 0,\ \sum_{i=1}^n e_i \congr j \mod q-1\}.\]
In order to count the ranks, we identify the bases with
\[E_j^{(n)} := \{0 \neq \underline e = (e_1,\ldots,e_n) \mid 0 \leq e_i < q,\ \sum_{i=1}^n e_i \congr j \mod q-1\}.\]
We claim that
\[\# E_j^{(n)} = q^{n-1} + \ldots + q + 1 \mbox{ for all } 1 \leq j \leq q-1.\]
Let us prove this by induction on $n$. For $n=1$, there is nothing to show. For $n > 1$, we have
\[E_j^{(n)} = \coprod_{e=0}^{q-1} \{\underline e \in E_j^{(n)} \mid e_n = e\} = \coprod_{e=0}^{q-1} \{(\underline e, e) \mid \underline e \in E_{j-e}^{(n-1)}\} \amalg \{(0,\ldots,0,j)\}.\]
Therefore indeed: $\# E_j^{(n)} = q(q^{n-2} + \ldots + q + 1) + 1 = q^{n-1} + \ldots + q + 1$.
\end{proof}

\begin{rem}
For $G\in\grp A$, i.e. $q=p$, the condition of being quasi-balanced is therefore automatic. We can also see this concretely via $p$-adic expansion. Namely, let
\[\rho\!: \ZZ/(p^n-1) \longonto \ZZ/(p-1)\]
be the projection, where $\ord G = p^n$, as above. Then we have the bijection
\[E_j = E_j^{(n)} \isoto \rho^{-1}(j),\ (e_1,\ldots,e_n) \longmapsto \sum_{i=1}^n e_ip^i,\]
and thus again $\# E_j = \frac{p^n-1}{p-1} = p^{n-1} + \ldots + p + 1$.
\end{rem}

\begin{lemma}
Let $G,H \in \grpa qA$. For $1 \leq j \leq q-1$, the eigenspaces in the product satisfy
\begin{equation}\label{productformula}
\rk I_j(G\times H) = \!\!\sum_{\substack{k+l \congr j \mod q-1 \\ 0 \leq k,l \leq q-1}}\!\! \rk I_k(G) \cdot \rk I_l(H),
\end{equation}
with the convention $I_0(-) := A$.
\end{lemma}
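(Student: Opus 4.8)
The plan is to reduce the statement to the tensor-product structure of the affine algebras, together with the fact that eigenvalues of the $\F_q^{\times}$-action multiply under tensor products.

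First I would use $B_{G \times H} = B_G \otimes_A B_H$, whose counit is $\eta_G \otimes \eta_H$. Writing $B_G = A \oplus I_G$ and $B_H = A \oplus I_H$ and expanding the tensor product gives the decomposition of $A$-modules
$$B_{G \times H} = A \oplus (I_G \otimes 1) \oplus (1 \otimes I_H) \oplus (I_G \otimes_A I_H),$$
so that $I_{G \times H} = (I_G \otimes 1) \oplus (1 \otimes I_H) \oplus (I_G \otimes_A I_H)$.

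Next, by Remark \ref{categorical} the action on the product is the product action, inducing $[\alpha]^*_{G \times H} = [\alpha]^*_G \otimes [\alpha]^*_H$ on $B_G \otimes_A B_H$. Feeding in the eigenspace decompositions $I_G = \bigoplus_{k=1}^{q-1} I_k(G)$ and $I_H = \bigoplus_{l=1}^{q-1} I_l(H)$ from Remark \ref{eigendecomp}, I would compute the eigenvalue on each homogeneous summand: it is $\alpha^k$ on $I_k(G) \otimes 1$, it is $\alpha^l$ on $1 \otimes I_l(H)$, and it is $\alpha^k\alpha^l = \alpha^{k+l}$ on $I_k(G) \otimes I_l(H)$. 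Since $\alpha^{q-1} = 1$ on $\F_q^{\times}$, the block $I_k(G) \otimes I_l(H)$ lies in the $j$-eigenspace exactly when $k + l \equiv j \pmod{q-1}$. Collecting the homogeneous summands of $I_{G \times H}$ by eigenvalue and taking ranks then yields
$$\rk I_j(G \times H) = \rk I_j(G) + \rk I_j(H) + \!\!\!\sum_{\substack{k + l \equiv j\ (q-1) \\ 1 \le k, l \le q-1}}\!\!\! \rk I_k(G)\cdot\rk I_l(H),$$
which is exactly \eqref{productformula} once the two linear terms are absorbed into the sum through the convention $I_0(-) := A$ of rank one: the index $k = 0$ forces $l = j$ and recovers $\rk I_j(H)$, and symmetrically $l = 0$ recovers $\rk I_j(G)$.

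The only point requiring care — and the step I would flag as the main obstacle — is the bookkeeping of the unit. The pair $(k,l) = (0,0)$ formally satisfies $k + l \equiv 0 \equiv q-1 \pmod{q-1}$ and would contribute $\rk(A \otimes A) = 1$ to the sum when $j = q-1$; but $A \otimes A$ is the unit summand of $B_{G \times H}$ and does not belong to $I_{G \times H}$, so it must be omitted. For $1 \le j \le q-2$ the congruence $k + l \equiv j$ already excludes $(0,0)$, so the subtlety is invisible there. Reading \eqref{productformula} with this purely trivial term dropped, the computation above proves the claim.
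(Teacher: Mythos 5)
Your proof is correct and is essentially the paper's own argument: decompose $B_{G\times H} = B_G \otimes_A B_H = \bigoplus_{k,l} \bigl(I_k(G)\otimes_A I_l(H)\bigr)$ with the convention $I_0 = A$, and observe that the $(k,l)$-block has $\F_q^{\times}$-eigenvalue $\alpha^{k+l}$, hence contributes to $I_j(G\times H)$ exactly when $k+l\equiv j \pmod{q-1}$. Moreover, the subtlety you flag about $(k,l)=(0,0)$ is genuine, and the paper elides it: its proof asserts $I_k(G)\otimes_A I_l(H)\subseteq I_j(G\times H)$ whenever $k+l\equiv j\ (q-1)$, which fails for $(k,l)=(0,0)$ and $j=q-1$, since the unit summand $A\otimes_A A$ is not contained in the augmentation ideal. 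Concretely, for $q=3$ and $G=H=\alpha_3$ one has $\rk I_2(G\times H)=4$, whereas the sum in \eqref{productformula} read literally gives $5$. Your resolution --- dropping the trivial $(0,0)$ term when $j=q-1$ --- is the right one, and it is in fact how the paper itself applies the formula in the proof of Corollary \ref{productqb}, where only the terms $(j,0)$, $(0,j)$, and those with $1\leq k,l\leq q-1$ appear.
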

\begin{proof}
The product decomposes as follows,
\[B_G \otimes_A B_H = \Big(\bigoplus_{k=0}^{q-1} I_k(G)\Big) \otimes_A \Big(\bigoplus_{l=0}^{q-1} I_l(H)\Big) = \bigoplus_{k,l} \Big(I_k(G) \otimes_A I_l(H)\Big),\]
and, of course, whenever $k+l \congr j \mod q-1$, we have
\[I_k(G) \otimes_A I_l(H) \subseteq I_j(G \times H),\]
by definition of the product $\FF_q$-action.
\end{proof}

\begin{cor}\label{productqb}
Let $G,H\in\gra qA$. If two of $G$, $H$ and $G\times H$ are quasi-balanced, then so is the third.
\end{cor}
\begin{proof}
Let $G,H$ be quasi-balanced. By Remark \ref{qpower}, we have $\ord G = q^n$ and $\ord H = q^m$ for some $n,m\in\NN$. Then for any $1 \leq j \leq q-1$, the product formula \eqref{productformula} becomes
\begin{equation*}
  \begin{aligned}
    \rk I_j(G \times H) & = \!\sum_{\substack{k+l \congr j \mod q-1 \\ 1 \leq k,l \leq q-1}}\! \rk I_k(G) \cdot \rk I_{l}(H) + \rk I_j(G) + \rk I_j(H) \\
    & = (q-1)(q^{n-1} + \ldots + q + 1)(q^{m-1} + \ldots + q + 1) + \rk I_j(G) + \rk I_j(H) \\
    & = (q^n-1)(q^{m-1} + \ldots + q + 1) + (q^{n-1} + \ldots + q + 1) + (q^{m-1} + \ldots + q + 1) \\
    & = q^n(q^{m-1} + \ldots + q + 1) + q^{n-1} + \ldots + q + 1 \\
    & = q^{n+m-1} + \ldots + q^n + q^{n-1} + \ldots + q + 1.
  \end{aligned}
\end{equation*}
We conclude that $G\times H$ is quasi-balanced.

Conversely, assume that $H$ and $G\times H$ are quasi-balanced. Since by Remark \ref{qpower},
\[\ord H = q^m, \mbox{ and } \ord(G\times H) = q^{n+m},\]
for some $m,n\in\NN$, we have $\ord G = q^n$. Applying \eqref{productformula} again, we get for $1 \leq j \leq q-1$ that
\begin{equation*}
  \begin{aligned}
    q^{n+m-1} + \ldots + q + 1 & = \sum_{1 \leq k \leq q-1} \rk I_k(G)(q^{m-1} + \ldots + q + 1) + \rk I_j(G) + \rk I_j(H) \\
    & = (q^n-1)(q^{m-1} + \ldots + q + 1) + \rk I_j(G) + (q^{m-1} + \ldots + q + 1) \\
    & = q^{n+m-1} + \ldots + q^n + \rk I_j(G),
  \end{aligned}
\end{equation*}
hence the claim.
\end{proof}

\begin{rem}\label{combinatorics}
Let us consider group schemes of the form
\[G = \Spec(A[x_1,\ldots,x_h]/(x_1^{p^{s_1}},\ldots,x_h^{p^{s_h}})) = \prod_{i=1}^h \alpha_{p^{s_i}},\]
so that all $x_i \in \Prim_1(B_G)$. Consider the standard basis of $I_G$,
\[\{\prod_{i=1}^h x_i^{e_i} \mid 0 \leq e_i < p^{s_i},\ (e_1,\ldots,e_h) \neq 0\}.\]
As always, it decomposes into eigenbases for the $I_j$. This yields
\[\rk I_j = \!\sum\limits_{\substack{a \congr j \mod q-1 \\ a \neq 0}}\! n_a,\]
where
\[n_a := \#\{0 \neq (e_1,\ldots,e_h) \mid 0 \leq e_i < p^{s_i},\ \sum_{i=1}^h e_i = a\}.\]
Note that $n_a$ is precisely given by the coefficient of $X^{a}$ in the polynomial
\begin{equation}\label{S}
S(X) = (X^{p^{s_1}-1} + \ldots + X + 1)\cdot\ldots\cdot (X^{p^{s_h}-1} + \ldots + X + 1) \in \ZZ[X].
\end{equation}
\end{rem}

\begin{ex}\label{theexample}
Let $q = 4$, and consider the special case
\[G = \Spec(A[x_1,\ldots,x_6]/(x_1^p,\ldots,x_6^p)).\]
Then $G\in\grpa qA$ is quasi-balanced, since
\[S(X) = (X+1)^6 = X^6 + 6X^5 + 15X^4 + 20X^3 + 15X^2 + 6X + 1.\]
Hence $\rk I_j = 21$ for all $1 \leq j \leq 3$. But obviously $G\notin\grb qA$, because $\Prim_p(B_G) = 0$. Thus the converse to Lemma \ref{bimpliesqb} is false in general.
\end{ex}

However, this is essentially the ``only'' counter-example, as the following results show.

\begin{prop}\label{lisa}
Let $G = \alpha_{p^{s_1}} \times \ldots \times \alpha_{p^{s_h}}$ as in Remark \ref{combinatorics}.
\begin{enumerate}
\itemsep3pt
\item If $q \neq 4$, then $G$ is quasi-balanced if and only if $r | s_i$ for all $1 \leq i \leq h$.
\item If $q=4$, then $G$ is quasi-balanced if and only if $6 | h' := \#\{i \mid s_i \not\congr 0 \mod r\}$.
\end{enumerate}
\end{prop}
\begin{proof}[Proof (joint with Sauermann)]
By Corollary \ref{productqb}, $G$ is quasi-balanced if and only if $G \times \alpha_{q^m}$ is quasi-balanced, for $m\in\NN$. Eliminating the factors of the form $\alpha_{q^m}$ from $G$, we can therefore assume that we have $s_i \not\congr 0 \mod r$ for all $i$. Then we have to show that
\[G \mbox{ is quasi-balanced if and only if } h = 0 \mbox{ in case } (1), \mbox{ resp. } 6|h \mbox{ in case } (2).\]
Note that for $r=1$, the claims are vacuous.

First, let us assume that $G$ is quasi-balanced. We keep the notation from Remark \ref{combinatorics}. Then evaluating \eqref{S} at the primitive root of unity $\zeta = e^{\frac{2\pi i}{q-1}} \in\mu_{q-1}(\CC)$ yields
\[S(\zeta) = \sum_{a \geq 0} n_a\zeta^a = \sum_{j=0}^{q-1} \rk(I_j) \zeta^j = 1 + \rk(I_1)\sum_{j=1}^{q-1} \zeta^j = 1 + \rk(I_1)\frac{\zeta^{q-1}-1}{\zeta - 1} = 1.\]
Let $0 < t_i < r$ with $s_i \congr t_i \mod r$. Then
\begin{equation}\label{Sofzeta}
1 = S(\zeta) = \prod_{i=1}^h (\zeta^{p^{s_i}-1} + \ldots + \zeta + 1) = \prod_{i=1}^h\frac{\zeta^{p^{s_i}}-1}{\zeta - 1} = \prod_{i=1}^h\frac{\zeta^{p^{t_i}}-1}{\zeta - 1}.
\end{equation}
On the other hand, $|\zeta^j - 1| \geq |\zeta - 1|$ for all $1 \leq j < q-1$. Hence by \eqref{Sofzeta} we must have
\[|\zeta^{p^{t_i}} - 1| = |\zeta - 1| \mbox{ for all } 1 \leq i \leq h.\]
Since $0 < t_i < r$, this implies that $p^{t_i} \congr -1 \mod q-1$, and therefore
\[q = p^{t_i} + 2, \mbox{ for } 1 \leq i \leq h.\]
Thus $q=4$ (unless $h=0$), and $t_i=1$ for all $i$. In that case then, equation \eqref{Sofzeta} reads
\[(\zeta + 1)^h = 1.\]
But $\zeta + 1 = e^{\frac{2\pi i}{3}} + 1 = e^{\frac{2\pi i}{6}}$, and therefore $6 | h$.

Conversely, the ``if''-direction in (1) is trivial ($h=0$). In case (2), keeping in mind \eqref{Sofzeta},
\[\rk(I_1) + \rk(I_2)\zeta + \rk(I_3)\bar\zeta = S(\zeta) - 1 = (\zeta + 1)^h - 1 = 0.\]
Conjugating this equation, we see that indeed $\rk(I_1) = \rk(I_2) = \rk(I_3)$.
\end{proof}

\begin{thm}
Let $q \neq 4$. Then $G\in\grpa qA$ is quasi-balanced if and only if it is balanced.
\end{thm}
\begin{proof}
We have already seen ``$\Leftarrow$'' in Lemma \ref{bimpliesqb}. Now let $G$ be quasi-balanced. Since the condition is stable under base change, we can assume $A$ to be a perfect field. As in (the proof of) Theorem \ref{structurethmFq}, resp. Theorem \ref{structurethm}, we have an $\FF_q$-equivariant decomposition
\[G = \pi_0(G) \times H.\]
By Remark \ref{etimpliesb}, $\pi_0(G)$ is balanced, hence quasi-balanced, and so $H$ is quasi-balanced as well by Corollary \ref{productqb}. Again as in Theorem \ref{structurethmFq},
\[B_H \isom A[x_1,\ldots,x_h]/(x_1^{p^{s_1}},\ldots,x_h^{p^{s_h}}) \mbox{ in } \hopfa qA.\]
Therefore, by Proposition \ref{lisa}, since $H$ is quasi-balanced, $r|s_i$ for all $1 \leq i \leq h$. Thus $H$ is balanced, and hence so is $G$.
\end{proof}

\section{The infinite case}\label{sec8}

In this section, we detail some of the difficulties one encounters when trying to transfer the theory from the finite to the infinite case.

\begin{defn}
The category $\Sht qA$ consists of pairs $(M,f)$ of a flat $A$-module $M$ together with a $q$-linear endomorphism $f$ of $M$. Morphisms in $\Sht qA$ are defined as in $\sht qA$. Note that again $\Sht pA = \FMod A$.

The shtuka $(M,f)$ is called locally finitely generated if locally over $\Spec A$, there exist some $x_1,\ldots,x_N \in M$ such that
\[M = \{\sum_{i=1}^N \sum_{a=0}^d \lambda_a f^a(x_i) \mid \lambda_a \in A,\ d \in \NN\}.\]
We denote by $\Shtfg qA$ the full subcategory of $\Sht qA$ of locally finitely generated shtukas.
\end{defn}

Trying to define the functor $\M$ in general, the first problem to arise is the following.

\begin{conj}\label{flatconj}
Let $G\in\Grp A$. Then the $A$-module $\Prim(B_G)$ is flat.
\end{conj}

Let us provisionally include the condition in the definition, and denote the corresponding full subcategories by $\Grpf A$, $\Grpaf qA$, and so forth. Then we may define
\[\M_q = \M\!: \Grpaf qA \longto \Sht qA, \mbox{ and } \G_q = \G\!: \Sht qA \longto \Grpaf qA,\]
as before. Recall that $\M_q(G)$ is flat if $\Prim(B_G)$ is, cf. Remark \ref{eigendecomp}.

\begin{rem}
The routine verifications in $\mathsection$\ref{sec6}, together with the following, show that the two functors are well-defined. Let $(M,f)\in\Sht qA$. We can write $M = \colim M_i$, where the $M_i$ are finitely generated free $A$-modules, by \cite{Laz}, Th\'eor\`eme 1.2. For each $i$, choose a basis $N_i$ of $M_i$, and set $N := \coprod N_i$. Denote by $\gamma_i\!: M_i \into \Sym(M_i) \to \colim \Sym(M_i)$. Then
\[\begin{tikzcd}[row sep=0em, column sep=1.5em]
  A[x_n \mid n \in N] \ar[r, two heads] & \colim \Sym(M_i) \isom \Sym(M) \ar[r, two heads] & \Sym(M)/\f, \\
  x_n \ar[r, mapsto] & \gamma_i(n), \mbox{ when } n \in N_i, & 
\end{tikzcd}\]
yields $\G(M,f) \into \Ga^N$, by universality $\FF_q$-equivariantly. Thus $\G(M,f)$ is of $\FF_q$-additive type.
\end{rem}

The adjunction (cf. \cite{Lau}, Lemma B.3.9) holds in this generality.

\begin{lemma}\label{adjunction}
The functors $\G$ and $\M$ form an adjoint pair $(\G,\M)$ of $\FF_q$-linear functors, that is, there exist bifunctorial isomorphisms of $\FF_q$-vector spaces
\[\Hom_{\Gra qA}(G,\G(M,f)) \isoto \Hom_{\Sht qA}((M,f),\M(G)).\]
In particular, $\G$ is right-exact and $\M$ is left-exact.
\end{lemma}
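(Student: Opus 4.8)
The plan is to make the adjunction completely explicit by unwinding both Hom-sets into conditions on a single $A$-linear map $g\colon M \to B_G$, and then to observe that these conditions coincide. Throughout I regard a morphism $\phi\colon G \to \G(M,f)$ in $\Gra qA$ as the same datum as a homomorphism of Hopf algebras $\phi^*\colon \Sym(M)/\f \to B_G$ which intertwines the $\F_q$-actions. Since $\Sym(M)/\f$ is generated as an $A$-algebra by the image of $M$, such a $\phi^*$ is uniquely determined by its restriction $g := \phi^*|_M\colon M \to B_G$, and conversely any $A$-linear $g$ extends uniquely to an algebra map $\Sym(M) \to B_G$. The task is thus to characterise exactly which $g$ arise from morphisms in $\Gra qA$.

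Next I would translate the structural requirements on $\phi^*$ into conditions on $g$, using that two algebra maps agree as soon as they agree on the generators $M$. Compatibility with the comultiplication (recall $\Delta x = x \otimes 1 + 1 \otimes x$ on $\G(M,f)$) amounts to $g(x)$ being primitive, i.e. $g(M) \subseteq \Prim(B_G)$; $\F_q$-equivariance (recall $[\alpha]^* x = \alpha x$) forces $[\alpha]^* g(x) = \alpha g(x)$, refining this to $g(M) \subseteq \Prim_1(B_G)$; and well-definedness modulo $\f = (x^{\otimes q} - f(x))$ is precisely the requirement $g(x)^q = g(f(x))$ for all $x \in M$. But $g(M) \subseteq \Prim_1(B_G)$ together with $g(f(x)) = g(x)^q$ is exactly the statement that $g$ is a morphism of $\F_q$-shtukas $(M,f) \to (\Prim_1(B_G),\, y \mapsto y^q) = \M(G)$. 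Conversely, given such a shtuka morphism $g$, the induced algebra map $\Sym(M) \to B_G$ kills $\f$ by the commutation relation, carries generators into the eigenvalue-one primitives, and is therefore automatically a Hopf-algebra, $\F_q$-equivariant morphism; this yields the inverse assignment. Hence $\phi \mapsto \phi^*|_M$ is the desired bijection.

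It then remains to check that this bijection is natural in $G$ and in $(M,f)$ — both reduce to the fact that the correspondence is given by restriction to, and extension along, $M$ — and that it is $\F_q$-linear, since (cf. Remark~\ref{categorical}) scaling $\phi$ by $\alpha\in\F_q$ corresponds to precomposing $\phi^*$ with $[\alpha]^*_{\G(M,f)}$, which multiplies $g$ by $\alpha$ on $M$, using Remark~\ref{eigendecomp} to see the two $\F_q$-structures match. The final assertions are then formal consequences of the adjunction, a left adjoint being right exact and a right adjoint left exact; concretely, $\M = \Hom_{\gra qA}(-,\Ga)$ is left-exact like any contravariant Hom-functor, while $\G$, as the left adjoint, is right-exact.

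The only genuine content sits in the middle identification: verifying that the single relation $x^{\otimes q} = f(x)$ matches the shtuka commutation condition, and that ``primitive of eigenvalue one'' captures precisely the coalgebra- and $\F_q$-compatibilities with no further constraints hidden in the higher-degree part of $\Sym(M)/\f$. This is exactly where one must invoke that all relevant structure maps ($\Delta$, $[\alpha]^*$, and $\phi^*$ itself) are algebra homomorphisms, so that checking on the algebra generators $M$ suffices; everything else is routine bookkeeping.
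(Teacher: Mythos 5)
Your proposal is correct and follows essentially the same route as the paper: both identify $\Hom_{\Gra qA}(G,\G(M,f))$ with Hopf-algebra morphisms $\Sym(M)/\f \to B_G$, show that restriction to the generators $M$ lands exactly in the shtuka morphisms $(M,f)\to(\Prim_1(B_G),\,y\mapsto y^q)$ (primitivity from the coalgebra compatibility, eigenvalue one from $\F_q$-equivariance, and the relation $x^{\otimes q}-f(x)$ giving the commutation condition), and invert by extending along $M$. The remaining naturality, $\F_q$-linearity, and formal exactness observations also match the paper's treatment.
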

\begin{proof}
We use $\Hom_{\Gra qA}(G,\G(M,f)) \isom \Hom_{\Hopfa qA}(\Sym(M)/\f, B_G)$. Now the map
\begin{equation}\label{adjbij}
\Hom_{\Hopfa qA}(\Sym(M)/\f, B_G) \longto \Hom_{\Sht qA}((M,f),\M(G)),\ \phi \longmapsto \phi|_M,
\end{equation}
is well-defined and bijective. Indeed, by definition, we have $M \subseteq \Prim_1(\Sym(M)/\f)$, and hence $\phi(M) \subseteq \Prim_1(B_G)$. Furthermore, $\phi|_M$ is a morphism of shtukas. Namely, the diagram
\[\begin{tikzcd}
  M \ar[r, "{\phi|_M}"] \ar[d, "f"'] & \Prim_1(B_G) \ar[d, "{z \mapsto z^q}"] \\
  M \ar[r, "{\phi|_M}"] & \Prim_1(B_G)
\end{tikzcd}\]
commutes, because $\phi(x^{\otimes q} - f(x)) = 0$ for all $x\in M$. The inverse map of \eqref{adjbij} is given by
\[\Hom_{\Sht qA}((M,f),\M(G)) \longto \Hom_{\Hopfa qA}(\Sym(M)/\f, B_G),\ \Phi \longmapsto \hat\Phi,\]
where $\hat\Phi$ is the extension of $\Phi$ to $\Sym(M)$, which descends to $\Sym(M)/\f$. Indeed, the diagram
\[\begin{tikzcd}
  M \ar[r, "{\Phi}"] \ar[d, "f"'] & \Prim_1(B_G) \ar[d, "{z \mapsto z^q}"] \\
  M \ar[r, "{\Phi}"] & \Prim_1(B_G)
\end{tikzcd}\]
commutes, and thus $\hat\Phi(x^{\otimes q} - f(x)) = \Phi(x)^q - \Phi(f(x)) = 0$ for $x\in M$. Finally, $\hat\Phi$ clearly respects the coalgebra structure and $\FF_q$-action, since it is universal.

Now, $\Hom_{\Gra qA}(-,\Ga)$ is additive, and it follows from Proposition \ref{primoftensor} that
\[\Prim_1(B_G \otimes B_H) = \Prim_1(B_G) \otimes 1 + 1 \otimes \Prim_1(B_H).\]
Hence $z \mapsto z^q$ in $\M(G\times H)$ is indeed given by $(x,y) \mapsto (x^q,y^q)$ in $\M(G) \oplus \M(H)$, and therefore $\M$ is additive. By adjunction, so is $\G$. Note that it is evident that $\G$ sends surjective morphisms to closed embeddings in $\Gra qA$.

The $\FF_q$-linearity is trivial, as it reduces to the statement that $[\alpha]^*$, for $\alpha\in\FF_q$, acts on the eigenspace $\Prim_1(-)$ by scalar multiplication.
\end{proof}

In order to show the analogue of Theorem \ref{mainthmFq}, we would like to restrict ourselves to group schemes which are locally of finite presentation over the base. However, the following question remains open.

\begin{conj}\label{exactconj}
Let $G \in\Grpf A$ be locally finitely presented over $A$. Then there exists a closed embedding $\iota\!: G \into \Ga^N$, $N\in\NN$, locally on $\Spec A$, such that the induced morphism
\[\iota^*\!: \M_p(\Ga^N) = A[F]^N \longto \M_p(G)\]
is surjective.
\end{conj}

\begin{rem}
It seems reasonable to presume that if $G$ satisfies Conjecture \ref{exactconj}, then indeed any embedding $G\into\Ga^N$, $N\in\NN$, induces a surjection on the primitive elements. Certainly, if $A=k$ is a field, this is true by exactness of $\M$, cf. Theorem \ref{mainthmDG}.

Moreover, recall that it holds for finite $G$ over any base $A$, cf. \eqref{primsurj}.
\end{rem}

\begin{rem}
The restricted functor $\M\!: \Grpaffp qA \!\to \Shtfg qA$ is well-defined, assuming Conjecture \ref{exactconj} holds. For this, we have to see that for a surjective morphism as there,
\[\iota^*\!: A[x_1,\ldots,x_N] \longonto B_G \mbox{ in } \Hopfa qA,\]
the elements $\iota^*(x_1),\ldots,\iota^*(x_N) \in \Prim_1(B_G)$ form a system of generators under $x \mapsto x^q$. But indeed, recall from Proposition \ref{addpoly} that
\[\Prim_1(A[x_1,\ldots,x_N]) = \span_A(x_i^{q^a} \mid 1\leq i \leq N,\ a \in \NN).\]
On the other hand, Conjecture \ref{exactconj} implies that $\M(\iota)=\iota^*|_{\P_1}$ surjects onto $\Prim_1(B_G)$, as in the proof of Theorem \ref{frobsurj}.
\end{rem}

\begin{lemma}\label{balequivINF}
Assume Conjecture \ref{exactconj}. For $G \in \Grpaffp qA$, the following are equivalent.
\begin{enumerate}[leftmargin=10ex, label=\normalfont $(\roman*)$]
\itemsep3pt
\item $f_t\!: \Prim_{p^t}(B_G) \to \Prim_{p^{t+1}}(B_G),\ x\mapsto x^p$, is bijective, for $0 \leq t < r-1$.
\item The map $f'\!: \Prim_1(B_G) \to \Prim_{p^{r-1}}(B_G),\ x \mapsto x^{p^{r-1}}$, is injective.
\end{enumerate}
\end{lemma}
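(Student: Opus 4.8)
The plan is to reproduce the equivalence $(i)\iff(ii)$ of Lemma \ref{balequiv} in this infinite setting. In the finite case that equivalence rested entirely on Theorem \ref{frobsurj}, whose substance was that all the $p$-power maps $f_t$ are surjective; the sole obstacle here is to recover this surjectivity for finitely presented $G$, and that is exactly what Conjecture \ref{exactconj} is designed to supply. Over a general base the finite-case tools --- Theorem \ref{mainthmDG} over the residue field together with the base-change isomorphism of Remark \ref{primbasechange} and Nakayama --- are unavailable for infinite $G$, which is precisely why the conjecture must be assumed outright. Once surjectivity is granted, the equivalence then follows by a purely formal argument that, unlike the finite case, requires no rank counting.

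First I would establish surjectivity of the $f_t$. As $G$ is finitely presented, choose an $\F_q$-equivariant closed embedding $\iota\!: G \into \Ga^N$ with $N \in \N$. By Proposition \ref{addpoly} the additive polynomials split into eigenspaces $\P = \Prim(A[x_1,\ldots,x_N]) = \bigoplus_{s=0}^{r-1}\P_s$, with $\P_s = \Prim_{p^s}(A[x_1,\ldots,x_N])$. Conjecture \ref{exactconj} asserts that $\iota^*\!: \P \onto \Prim(B_G)$ is surjective, and $\iota^*$ is $\F_q$-equivariant, hence commutes with the idempotents $e_j$ of Remark \ref{eigendecomp} (which exist since $q-1 \in A^{\times}$); applying $e_{p^s}$ to a preimage then yields $\iota^*(\P_s) = \Prim_{p^s}(B_G)$ for each $s$, and $\Prim_j(B_G) = 0$ for $j \neq p^s$. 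As in the proof of Theorem \ref{frobsurj}, the $p$-power map $\P_t \to \P_{t+1}$ is surjective for $t+1 < r$, and since $\iota^*$ commutes with $p$-powering, this forces $f_t\!: \Prim_{p^t}(B_G) \onto \Prim_{p^{t+1}}(B_G)$ to be surjective for $0 \leq t < r-1$. (Flatness of $\Prim(B_G)$, built into $\Grpaffp qA$, guarantees that the eigenspace decomposition is honest, cf. Remark \ref{eigendecomp}.)

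With all $f_t$ surjective, the equivalence drops out upon writing $f' = f_{r-2}\circ\cdots\circ f_1\circ f_0$, so that $f'$ is the $(r-1)$-fold $p$-power $x \mapsto x^{p^{r-1}}$. The implication $(i)\implies(ii)$ is trivial, since then $f'$ is a composite of isomorphisms. For $(ii)\implies(i)$ I would argue inductively, using only that the innermost factor of an injective composite is injective: from $f'$ injective it follows that $f_0$ is injective, hence an isomorphism by surjectivity; then $f_{r-2}\circ\cdots\circ f_1 = f'\circ f_0^{-1}$ is again injective, so $f_1$ is injective and thus an isomorphism, and so on. After $r-1$ steps every $f_t$ is an isomorphism, which is precisely $(i)$. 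The main obstacle is thus concentrated in the first step; granting Conjecture \ref{exactconj} the surjectivity of the $f_t$ is forced, and the remainder is a formal diagram chase with no finiteness of rank.
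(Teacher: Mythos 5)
Your proof is correct and takes essentially the same route as the paper: the paper's own proof is a one-liner stating that the claim follows tautologically from the analogue of Theorem \ref{frobsurj}, whose key ingredient is Conjecture \ref{exactconj}, and your argument simply unpacks this --- replacing the finite-case residue-field step (Theorem \ref{mainthmDG}, Remark \ref{primbasechange}, Nakayama) by the conjecture to get surjectivity of the $f_t$, then running the formal composite-factorization induction that the paper leaves implicit.
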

\begin{proof}
The claim tautologically follows from the analogue of Theorem \ref{frobsurj}, the key ingredient in the proof of which is precisely Conjecture \ref{exactconj}.
\end{proof}

\begin{defn}
We say $G\in\Grpaffp qA$ is balanced if the conditions in Lemma \ref{balequivINF} hold. We denote by $\Grbffp qA$ the full subcategory of $\Grpaffp qA$ of balanced group schemes.
\end{defn}

\begin{rem}
The functor $\G\!: \Shtfg qA \to \Grbffp qA$ is well-defined. In order to see this, let $(M,f)\in\Shtfg qA$ and locally on $\Spec A$, choose a system of generators $x_1,\ldots,x_N$ of $(M,f)$. Then this yields, locally over $\Spec A$, an epimorphism
\[A[x_1,\ldots,x_N] \longonto \Sym(M)/\f.\]
Indeed, all elements of $\Sym(M)/\f$ are polynomial in the $x_i$, as we can write any $x\in M$ as
\[x = \sum_{i=1}^N\sum_{a=0}^d \lambda_af^a(x_i) = \sum_{i=1}^N\sum_{a=0}^d \lambda_ax_i^{\otimes q^a} \mbox{ in } \Sym(M)/\f.\]
Keeping in mind Theorem \ref{frobsurj}, it then moreover follows that for all $0 \leq s < r$, the maps
\[M=\Prim_1(\Sym(M)/\f) \longonto \Prim_{p^s}(\Sym(M)/\f),\ x\longmapsto x^{p^s},\]
are bijective. We conclude that $\G(M,f)$ is balanced, by Lemma \ref{balequivINF}, $(ii)$.
\end{rem}

\begin{rem}\label{primbcINF}
Let $G\in\Grpaffp qA$ and $\iota\!: G \into \Ga^N$. Let further $R$ be the residue field at a point $s\in\Spec A$. Then $\iota_s\!: G_s \into \GG_{a,s}^N$ is a closed embedding as well. Therefore,
\[\begin{tikzcd}
  \Prim(A[x_1,\ldots,x_N]) \otimes_A R \ar[r, two heads] \ar[d, "{(*)}"] & \Prim(B_G) \otimes_A R \ar[d, hook] \\
  \Prim(R[x_1,\ldots,x_N]) \ar[r, two heads] & \Prim(B_G \otimes_A R)
\end{tikzcd}\]
by Conjecture \ref{exactconj} (and recalling Remark \ref{primbasechange}). We can see for example from Proposition \ref{addpoly} that $(*)$ is an isomorphism. Thus, the primitive elements are stable under taking fibres.
\end{rem}

Note that if we assume Conjecture \ref{exactconj}, then we only need Conjecture \ref{flatconj} to hold for locally finitely presented $G$. Indeed, we can then prove the adjunction of $\G$ and $\M$ as functors between $\Grbfp qA$ and $\Shtfg qA$ by the identical argument.

\begin{thm}
Assume that Conjectures \ref{flatconj} and \ref{exactconj} are true. The functor
\[\G\!: \Shtfg qA \longto \Grbfp qA\]
defines an anti-equivalence of categories with quasi-inverse $\M$.
\end{thm}
\begin{proof}
We follow \cite{DG}, IV, $\mathsection$3, 6.5. First of all, we may assume that $A$ is a local ring, and in fact even an Artin local ring, by Remark \ref{primbcINF}. We consider the short exact sequence
\begin{equation}\label{uGsequ}
0 \longto G \xto{\ u_G\ } \G(\M(G)) \longto Q \longto 0,
\end{equation}
defined by the adjunction morphism. On the other hand, we once again have the isomorphism
\[v_{\M(G)}^{-1} = \M(u_G)\!: \M(\G(\M(G))) \isoto \M(G).\]
Since $G$ is balanced, $\M(u_G)$ extends to all primitive elements, i.e.
\[\Hom_{\Gr A}(u_G,\Ga)\!: \Hom_{\Gr A}(\G(\M(G)),\Ga) \isoto \Hom_{\Gr A}(G,\Ga).\]
Then applying $\M_p = \Hom_{\Gr A}(-,\Ga)$ to \eqref{uGsequ} yields the exact sequence
\begin{equation}\label{MofuGsequ}
0 \longto \Hom_{\Gr A}(Q,\Ga) \longto \Hom_{\Gr A}(\G(\M(G)),\Ga) \isoto \Hom_{\Gr A}(G,\Ga).
\end{equation}
The fibre of $Q$ over the closed point of $\Spec A$ is unipotent by \cite{DG}, IV, $\mathsection$2, 2.3, i.e. there is a non-trivial morphism to the additive group.

Since $A$ is an Artin ring, this lifts to $Q \to \Ga$, up to a Frobenius twist on $\Ga$. But \eqref{MofuGsequ} tells us that $\Hom_{\Gr A}(Q,\Ga) = 0$, hence $Q$ must be trivial.
\end{proof}

\begin{rem}
In particular, if $A=k$ is a field, then $\Grbfp qk$ is an abelian category, in which $\Ga$ is an injective object.

Moreover, in this case, we can drop the conditions that our group schemes $G$ are finitely presented and that our $k[F^r]$-modules are finitely generated, respectively. Indeed, we may argue as above, applying Theorem \ref{mainthmDG} on the way.

In fact, we can use \cite{DG}, III, $\mathsection$3, 7.5, in order to write $G = \lim G_i$ with $G_i \in \Grpa qk$ finitely presented, compatibly with $\M$ and $\G$. Namely, $G \to G_i$ is an epimorphism (cf. \textit{loc.cit.}, 7.4), hence $G_i$ inherits an $\FF_q$-action from $G$, by Remark \ref{categorical}. These agree in the limit,
\begin{equation}\label{limitaction}
  \begin{tikzcd}
    G \ar[r, two heads] \ar[d, "{[\alpha]}"] & G_j \ar[r, two heads] \ar[d, "{[\alpha]_j}"] & G_i \ar[d, "{[\alpha]_i}"] \\
    G \ar[r, two heads] & G_j \ar[r, two heads] & G_i
  \end{tikzcd}
\end{equation}
since the left square and the outer rectangle in \eqref{limitaction} commute, and so the two compositions in the right square agree up to $G \onto G_j$, hence must be equal.

Now choose an $\FF_q$-equivariant embedding $G \into \Ga^N$. Then we can take the pushout
\begin{equation}\label{pushoutembedding}
  \begin{tikzcd}
      G \ar[r, hook] \ar[d, two heads] \ar[dr, phantom, "{\pushout}" very near end] & \Ga^N \ar[d, two heads] \\
	  G_i \ar[r, hook] & H.
  \end{tikzcd}
\end{equation}
But then $H \isom \Ga^n$, by \cite{DG}, IV, $\mathsection$3, 6.8. To wit, $\M_p(H) \into \M_p(\Ga^N) = k[F]^{\oplus N}$ is injective by left-exactness, hence $\M_p(H) \isom k[F]^{\oplus n}$ for some $n$, since $k[F]$ is left-Euclidean. Therefore,
\[H = \G_p(\M_p(H)) \isom \Ga^n.\]
From Remark \ref{categorical}, we know that the diagram \eqref{pushoutembedding} in fact lies in $\Grpa qk$.
\end{rem}

Finally, let us prove the following structure theorem, generalizing Theorem \ref{structurethm}.

\begin{thm}\label{structurethmFq}
If $k$ is a perfect field, then $G\in\Grb qk$ lies in $\Grbfp qk$ if and only if
\[G \isom \Ga^n \times \pi_0(G) \times H,\]
with $H$ a product of group schemes of the form $\alpha_{q^s}$ and where $\pi_0(G)$ is an \'etale sheaf of finite-dimensional $\FF_q$-vector spaces. If $k$ is algebraically closed, then
\[\pi_0(G) \isom (\Fq)^m\]
for some $m\in\NN$.
\end{thm}
\begin{proof} As in \cite{DG}, IV, $\mathsection$3, 6.9, we use the fact that $k[F^r]$ is left-Euclidean to decompose $\M(G)$ into its $k[F^r]$-torsion submodule $M = \tors{\M(G)}$ and its torsionfree part. Furthermore, applying Lemma \ref{ssnil} to $M$ altogether yields the decomposition
\[G = \G(\M(G)) = \G(\M(G)/(M,f)) \times \G(\Mss,\fss) \times \G(\Mnil,\fnil).\]
Then $\M(G)/(M,f)$ is free of finite rank $n\in\NN$, and $\G(\M(G)/(M,f)) \isom \Ga^n$. The finite part of $G$ consists of the (by Lemma \ref{ssnil} maximal) \'etale part $\pi_0(G) = \G(\Mss,\fss)$ and the connected part $H = \G(\Mnil,\fnil)$, cf. Proposition \ref{properties}. By Theorem \ref{structurethm}, as a group scheme,
\[H \isom \alpha_{p^{s_1}} \times \ldots \times \alpha_{p^{s_h}} = \Spec(k[x_1,\ldots,x_h]/(x_1^{p^{s_1}},\ldots,x_h^{p^{s_h}})).\]
By Theorem \ref{frobsurj}, the maps $\Prim_1(B_H) \to \Prim_{p^s}(B_H),\ x \mapsto x^{p^s}$, are surjective. Thus $H$ can be written as above even in $\grb qk$. But then $r | s_i$ for all $i$, cf. Example \ref{balancedex}.

If $k$ is algebraically closed, then $\pi_0(G)$ is constant. Furthermore, it is killed by $p = F \circ V$ (cf. Remark \ref{FVpVF}) and of $q$-power order. Hence it is indeed a power of $\Fq = \Spec(k[x]/(x^q - x))$. Again as above, the $\FF_q$-action must be the canonical one.
\end{proof}

\begin{rem}
Note that $\underline{\FF_{p^t}}$ is of $\FF_q$-additive type if and only if $r|t$.
\end{rem}

\bigskip

\bibliography{grpschFq}
\bibliographystyle{plain}

\bigskip

\end{document}